\newtheorem{theorem}{Theorem}[section]
\newtheorem{corollary}{Corollary}
\newtheorem{lemma}[theorem]{Lemma}
\newtheorem{prop}[theorem]{Proposition}
\theoremstyle{definition}
\newtheorem{definition}{Definition}[section]
	\crefname{claim}{Claim}{Claims}
	\Crefname{claim}{Claim}{Claims}
	\crefname{app-corollary}{Corollary}{Corollaries}
	\Crefname{app-corollary}{Corollary}{Corollaries}
	\crefname{app-definition}{Definition}{Definitions}
	\Crefname{app-definition}{Definition}{Definitions}
	\crefname{figure}{Figure}{Figures}
	\Crefname{figure}{Figure}{Figures}
	\crefname{lemma}{Lemma}{Lemmata}
	\Crefname{lemma}{Lemma}{Lemmata}
	\crefname{app-lemma}{Lemma}{Lemmata}
	\Crefname{app-lemma}{Lemma}{Lemmata}
	\crefname{app-proposition}{Proposition}{Proposition}
	\Crefname{app-proposition}{Proposition}{Proposition}
	\crefname{app-theorem}{Theorem}{Theorems}
	\Crefname{app-theorem}{Theorem}{Theorems}
\begin{document}

\setlength{\abovedisplayskip}{0pt}
\setlength{\belowdisplayskip}{0pt}
\setlength{\abovedisplayshortskip}{0pt}
\setlength{\belowdisplayshortskip}{0pt}
\vspace{1.5in}
% This is cover.tex

% Insert your information as appropriate.

\title[Smoothness and Regularity in the Poincaré Problem]{On the Smoothness and Regularity of the Chess Billiard Flow and the Poincaré Problem}

\author{Sally Zhu}

% Don't change the information below, except for the date. Use \rsiminipaperdate instead of \rsifinalpaperdate for the mini paper

\maketitle

\vspace{0.1in}

\begin{centering}
Under the direction of

Zhenhao Li

Graduate Student

Massachusetts Institute of Technology

\end{centering}
\vspace{0.5in}

\begin{abstract}
% This is abstract.tex

\noindent The Poincaré problem is a model of two-dimensional internal waves in stable-stratified fluid. The chess billiard flow, a variation of a typical billiard flow, drives the formation behind and describes the evolution of these internal waves, and its trajectories can be represented as rotations around the boundary of a given domain. We find that for sufficiently irrational rotation in the square, or when the rotation number $r(\lambda)$ is Diophantine, the regularity of the solution $u(t)$ of the evolution problem correlates directly to the regularity of the forcing function $f(x)$. Additionally, we show that when $f$ is smooth, then $u$ is also smooth. These results extend studies that have examined singularity points, or the lack of regularity, in rational rotations of the chess billiard flow. We also present numerical simulations in various geometries that analyze plateau formation and fractal dimension in $r(\lambda)$ and conjecture an extension of our results. Our results can be applied in modeling two dimensional oceanic waves, and they also relate the classical quantum correspondence to fluid study. 
\end{abstract}
\nopagebreak

% This is paper.tex
\newtheorem{conjecture}[theorem]{Conjecture}
\newtheorem{thm}[theorem]{Theorem}

\section{Introduction}

Internal waves are important to the study of oceanography and to the theory of rotating fluids. The waves describe how an originally unmoving fluid can move and evolve under a periodic forcing function. We study the behavior of a particular two-dimensional model for these internal waves, called the Poincaré problem, which forms patterns called billiard flows.

Billiard flows are a type of mapping that maps points on a boundary of a given shape to another point on that boundary, with some sort of reflection or bouncing step. For example, the most familiar billiard flow is the pool billiard mapping in a game of pool, in which the ball is bounced off one side of the table and follows a reflective trajectory in which the angle of incidence equals the angle of exit. 

We consider a variation of that billiard flow, called the \textit{chess billiard flow}. Rather than preserving the angles of reflection, this map instead preserves the slopes of the trajectories. Each mapping $b$ consists of traveling first on a line of slope $\rho$, and then bouncing off the boundary at a slope of $-\rho$. We visualize this in Figure \ref{fig:chessbilliard} from Dyatlov et al. \cite{DWZ}, in which we start at $x$, travel on the blue line of some slope $\rho$, then bounce off the side and traveling on the red line of slope $-\rho$, to $b(x)$. 

\begin{figure}[H]
    \centering
    \includegraphics[width=6cm]{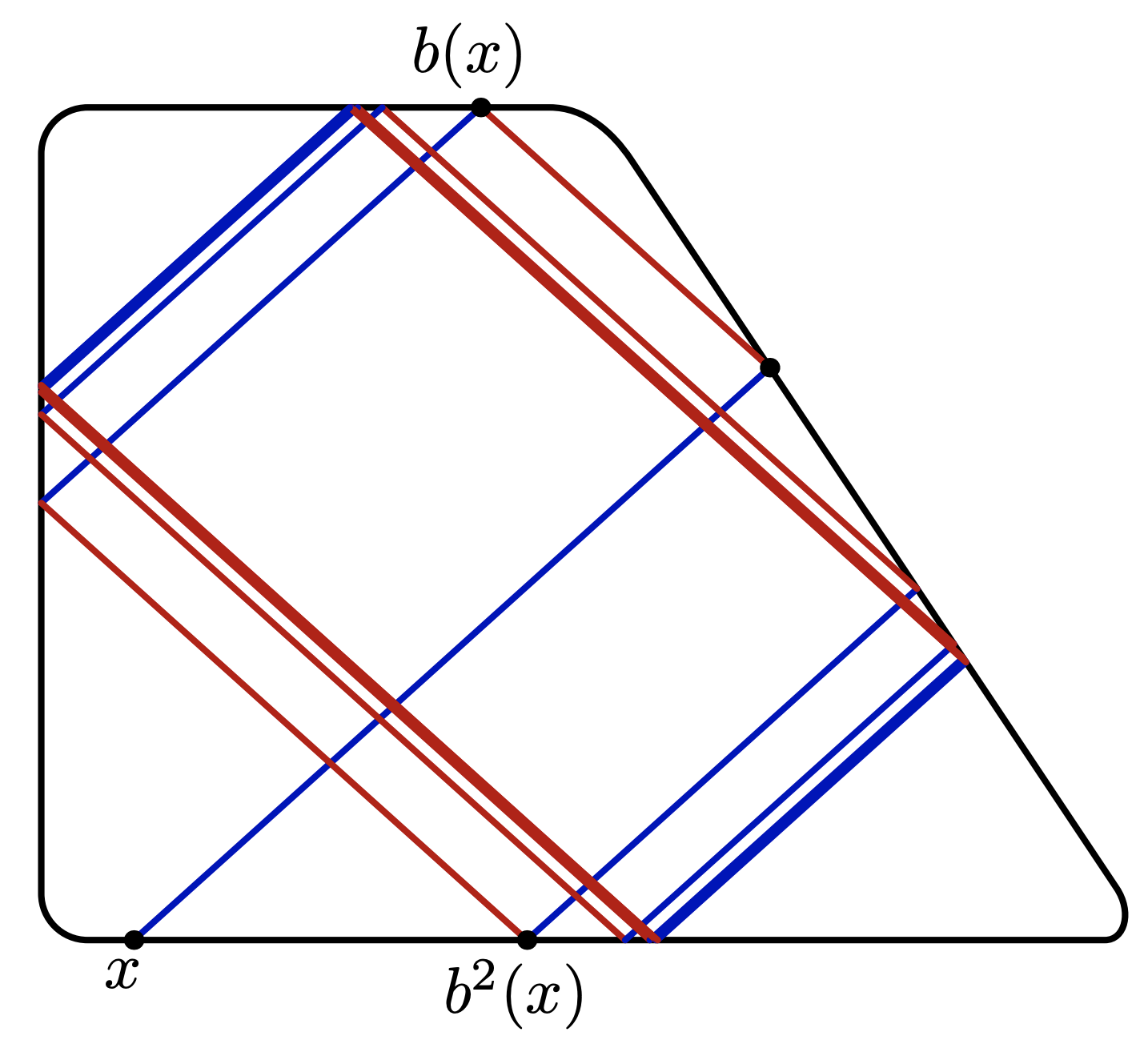}
    \caption{Depiction of a series of chess billiard mappings on a trapezoid, traveling across the parallel red and blue lines, from $x$ to $b(x)$ to $b^2(x)$ and so on, from Dyatlov et al. \cite{DWZ}}
    \label{fig:chessbilliard}
\end{figure}

The figure shows the mapping from $x$ to $b(x)$ to $b^2(x)$, and so on, recursively traveling on the blue and red lines. Note that each mapping consists of two movements: one blue line, then one red line. 

Another way to describe the chess billiard map is as a rotation of points along the boundary. Figure \ref{fig:squarebilliardrotationleft} shows one mapping on the square, from $x$ to $b(x)$. We can imagine $x$ as being rotated counterclockwise to $b(x)$, following the arrow in the figure. Figure \ref{fig:squarebilliardrotationright} further depicts this, where we can visualize $b(x)$ being rotated to $b^2(x)$ following the arrow. This rotation allows us to more easily understand how the chess billiard map maps points. 

\begin{figure}[H]
    \centering
    \begin{subfigure}[b]{0.3\textwidth}
        \centering
         \includegraphics[width=\textwidth]{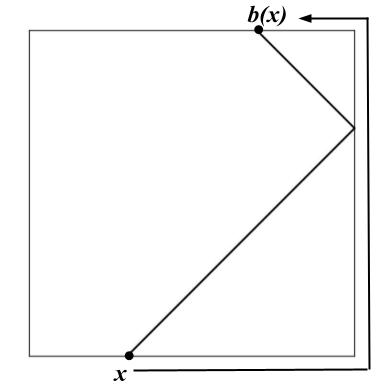}
         \caption{Mapping from $x$ to $b(x)$.}
         \label{fig:squarebilliardrotationleft}
    \end{subfigure}
    \hspace{1cm}
    \begin{subfigure}[b]{0.3\textwidth}
        \centering
         \includegraphics[width=\textwidth]{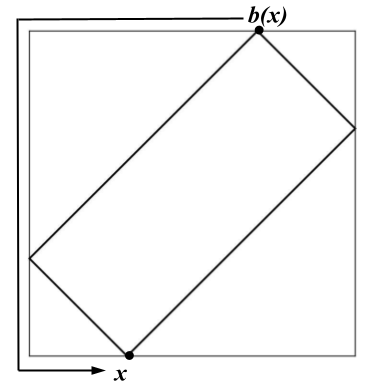}
         \caption{Mapping from $b(x)$ to $x=b^2(x)$.}
         \label{fig:squarebilliardrotationright}
    \end{subfigure}
    \caption{A depiction of how the chess billiard map can be viewed as a rotation of points along the boundary.}
    \label{fig:squarebilliardrotation}
\end{figure}

We quantify this rotation using the \textit{rotation number} $r$, which can be thought of as the average rotation per mapping over time. For example, in Figure \ref{fig:squarebilliardrotationright}, since it takes two mappings of $b$ to complete a full rotation (as $x = b^2(x)$), then, on average, each mapping travels $1/2$ of the boundary. Hence, the rotation number is $r = \frac{1}{2}$. 

Now we discuss rational and irrational rotation, which correspond directly to the rationality or irrationality of $r$. When $r$ is rational, as in Figure \ref{fig:squarebilliardrotation}, we can see a periodic trajectory—the same lines are traveled along again and again. However, in an irrational rotation (i.e. $r$ is irrational), a point can never be mapped to itself again in some integer number of rotations, so we don’t expect any periodic trajectories. The chess billiard flow has been previously studied in depth for rational rotation, and we study irrational rotation. 

We look at the wave problem known as the Poincaré problem (see Equation (\ref{eq:1}) in Section \ref{section2}), which is a two-dimensional model that describes how an originally unmoving stable-stratified fluid can move and evolve. Specifically, the solution $u$ describes the behavior of the waves formed, which follows the chess billiard mappings. The direct connection between the Poincaré problem and the chess billiard flow has been shown and verified mathematically and experimentally \cite{Hazewinkle,Lenci}. 

We examine the differentiability of $u$ for different types of trajectories of the billiard flow. Previously, Dyatlov et al \cite{DWZ} showed that when $r$ is rational (i.e. it is a periodic trajectory), then $u$ is not smooth (i.e. $u$ is not highly differentiable). This is because singularity points (points where the derivative does not exist) form along the trajectory of the billiard flow. For example, in the illustrations by Dyatlov et al. \cite{DWZ} in Figure \ref{fig:irrationalrational}, distinct lines form in the rational rotation in Figure \ref{fig:rirleft}, where $u$ is not smooth. We instead investigate differentiation in cases of irrational rotation, in which case we don’t expect the formation of singularities (since no obvious trajectory exists), and instead expect the waves to eventually smooth out. For example, in the near-irrational rotation in Figure \ref{fig:rirright}, the solution seems much smoother. 

\begin{figure}[H]
    \centering
    \begin{subfigure}[b]{0.4\textwidth}
        \centering
         \includegraphics[width=\textwidth]{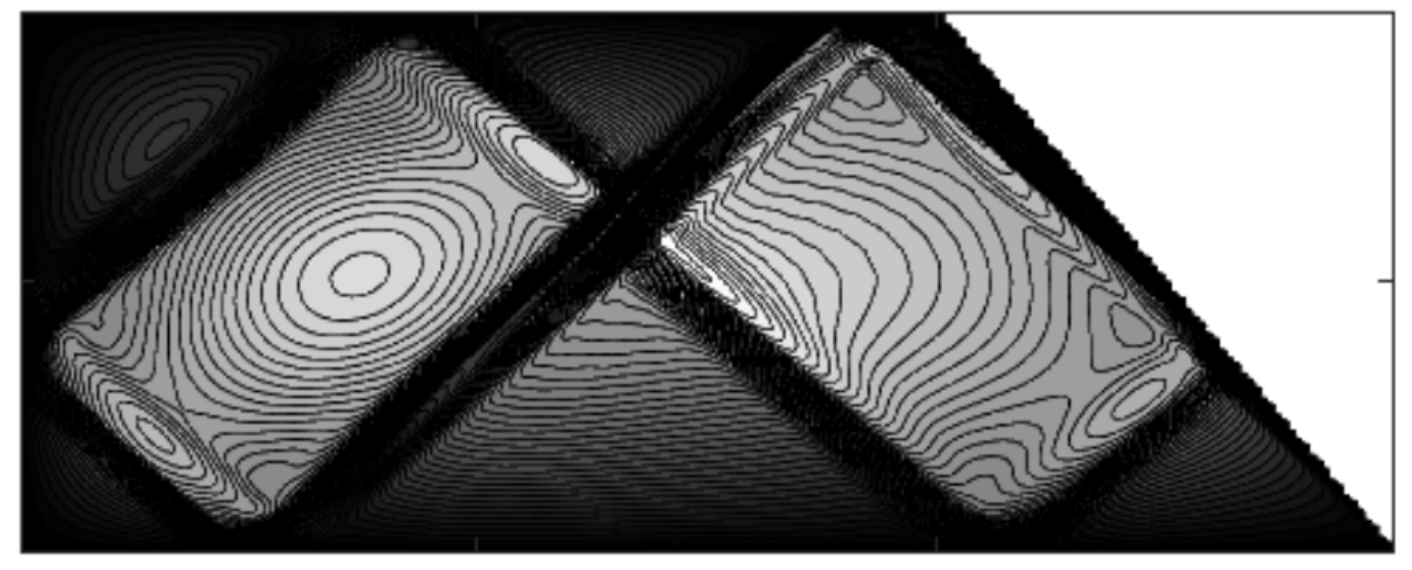}
         \caption{An illustration of rational rotation with singularity points.}
         \label{fig:rirleft}
    \end{subfigure}
    \hspace{1cm}
    \begin{subfigure}[b]{0.4\textwidth}
        \centering
         \includegraphics[width=\textwidth]{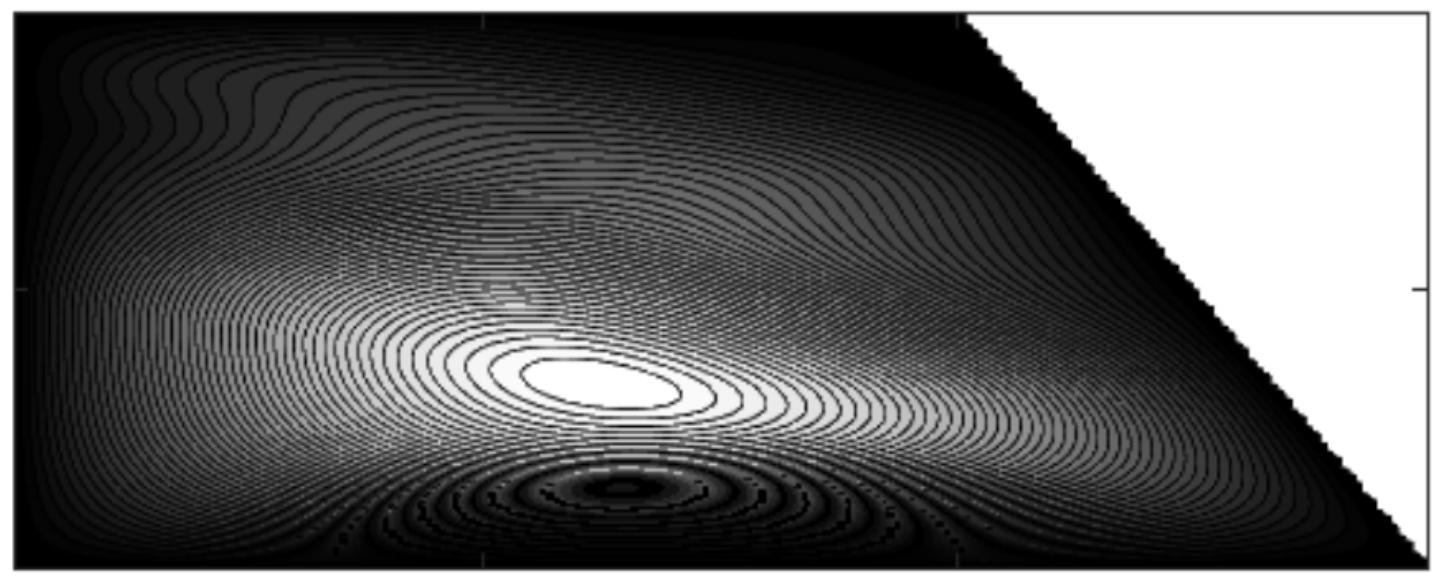}
         \caption{An illustration of nearly irrational rotation, which appears smooth.}
         \label{fig:rirright}
    \end{subfigure}
    \caption{Illustrations of rational vs. irrational rotations, from Dyatlov et al. \cite{DWZ}. Note that we assume much stronger irrationality conditions than the irrational rotation in panel (b).}
    \label{fig:irrationalrational}
\end{figure}

We show that sufficiently irrational rotation results in highly differentiable and smooth solutions $u$. 

This paper is organized as follows. In Section \ref{section2}, we establish the necessary definitions and prove supporting lemmas. In Section \ref{section3}, we present the proof of our main result. Finally, in Section \ref{section4}, we present numerical simulations and conjecture an extension of our result. 

\section{Preliminaries} \label{section2}

\subsection{The Chess Billiard Map and Rotation in the Square}

We more formally describe the chess billiard mapping, as in Dyatlov et al \cite{DWZ}. Given a domain $\Omega$ and its boundary $\partial \Omega$, we write the slopes in terms of $\lambda$, as $\rho = \dfrac{\sqrt{1-\lambda^2}}{\lambda}$ and $-\rho = -\dfrac{\sqrt{1-\lambda^2}}{\lambda}$, following Dyatlov et al \cite{DWZ}. Then, we write the two lines through $(p_1,p_2)$ with those slopes as
$$ y-p_2 = \pm \dfrac{\sqrt{1-\lambda^2}}{\lambda}(x-p_1), $$
for a given point $p = (p_1, p_2) \in \partial \Omega$, and $\lambda \in (0,1)$. In Figure \ref{fig:chessbilliard}, the series of parallel blue lines represent one of the set of lines with slope $\sqrt{1-\lambda^2}/\lambda$, and the red lines represent the other set of parallel lines, with the negative slope $-\sqrt{1-\lambda^2}/\lambda$.  

The first step of the chess billiard map is to take $p$ to the unique other point of intersection between $\partial \Omega$ and the line $y-p_2 = \dfrac{\sqrt{1-\lambda^2}}{\lambda}(x-p_1),$ essentially traveling from $p$ along a blue line. Call this point $p’$. The next step in the map is to take $p’$ to the unique other point of intersection between $\partial \Omega$ and the line $y-p’_2 = -\dfrac{\sqrt{1-\lambda^2}}{\lambda}(x-p’_1),$ essentially traveling from $p’$ along a red line. Then, we end up at $b(p,\lambda)$, completing one mapping. 

Next, we more formally describe the rotation number and how to compute it. To calculate the fraction rotated around the boundary, we can measure how much distance is traveled along the boundary for a given rotation. For example, in Figure 2, we see both of the mappings travel $1/2$ of the distance of the boundary; hence, the rotation number $r = 1/2$. We keep summing the fractional distance around $\partial \Omega$ for a total of $k$ mappings, and divide the sum by $k$ to get an average. The rotation number is this average as $k$ grows very large, and is written as 
\begin{equation} \label{eq:rotlimit}
r(p, \lambda) = \displaystyle \lim_{k_t \rightarrow \infty} \dfrac{d_t}{k_t}
\end{equation}
where $d_t$ is the distance traveled in $k_t$ iterations of the map, $p$ is the original starting point on $\partial \Omega$, and $\lambda$ is the term given to $b(p,\lambda)$. 

We have the following proposition from Brin and Stuck \cite{dynamical} about the rotation number. 

\begin{prop}
The limit used to define the rotation number (Equation (\ref{eq:rotlimit})) exists and is independent of the starting point $p$ and distance parameterization. 
\end{prop}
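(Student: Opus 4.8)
The plan is to reduce the statement to the classical theory of orientation-preserving circle homeomorphisms and Poincaré's rotation number. First I would observe that, for fixed $\lambda \in (0,1)$, the chess billiard map $b(\cdot, \lambda)$ is a composition of two maps of $\partial\Omega$: the map sending $p$ to the other intersection $p'$ of the positively-sloped line with $\partial\Omega$, and the map sending $p'$ to the other intersection with the negatively-sloped line. In the square (or convex domain) each of these is an involution of $\partial\Omega$ that reverses orientation, so their composition $b$ is an orientation-preserving homeomorphism of the topological circle $\partial\Omega$. Parameterizing $\partial\Omega$ by arc length (or any monotone parameterization) and passing to a lift $\tilde b:\mathbb{R}\to\mathbb{R}$ with $\tilde b(x+L) = \tilde b(x)+L$, the quantity $d_t/k_t$ in Equation~(\ref{eq:rotlimit}) is exactly (a rescaling of) $(\tilde b^{\,k_t}(x)-x)/k_t$.

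The key steps, in order, are: (1) verify that $b$ is a well-defined orientation-preserving circle homeomorphism, so that a continuous monotone lift $\tilde b$ exists and is unique up to integer translation; (2) invoke the standard subadditivity argument — the sequence $a_n(x) := \tilde b^{\,n}(x)-x$ satisfies $|a_{m+n}(x) - a_m(x) - a_n(x)| \le 1$ by periodicity and monotonicity of the lift, so Fekete's subadditive lemma gives that $a_n(x)/n$ converges; (3) show the limit is independent of $x$, using that $|\tilde b^{\,n}(x) - \tilde b^{\,n}(y)| < L$ for $x,y$ within one period, hence $|a_n(x)-a_n(y)| < L$ and the per-iterate limits agree; and (4) show independence of the distance parameterization, by noting that any two monotone parameterizations of $\partial\Omega$ are related by an increasing homeomorphism $h$, which changes $d_t$ only by a bounded amount (at most the total variation of $h$ minus identity over one period), so it disappears in the $k_t\to\infty$ average.

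The main obstacle I anticipate is purely technical rather than conceptual: checking carefully that the two ``bounce'' maps really are continuous involutions of the full boundary circle in the square. At the corners of the square, and at the finitely many points whose positively- or negatively-sloped line passes through a vertex, one must confirm the map extends continuously and stays a bijection of $\partial\Omega$ — this is where the convexity of the square and the fact that lines of fixed nonzero finite slope meet $\partial\Omega$ in exactly two points (counted appropriately) get used. Once that is in place, steps (2)–(4) are the textbook proof of existence and well-definedness of the Poincaré rotation number, so I would cite Brin--Stuck \cite{dynamical} for those and spend the bulk of the argument on step (1). I would also remark that $r$ depending on $p$ is vacuous precisely because of step (3): for a circle homeomorphism the rotation number is a single real number, which is why the statement can suppress the $p$-dependence in later sections and write $r(\lambda)$.
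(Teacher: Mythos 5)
Your proposal is correct and matches the paper's approach: the paper gives no proof of its own, simply citing Brin--Stuck for the classical Poincar\'e rotation number theory, which is exactly the reduction you carry out (and you rightly flag that the only domain-specific work is verifying the chess billiard map is an orientation-preserving circle homeomorphism). No gaps; your sketch is a faithful expansion of the citation.
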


Hence, we can write the rotation number only in terms of $\lambda$, as $r(\lambda)$. From here, we find an explicit form of $r(\lambda)$ of the chess billiard map in the square. 

\begin{prop} \label{prop:rotexplicit}
The rotation number $r(\lambda)$ of the chess billiard flow in a square is given by $\dfrac{\lambda}{\sqrt{1-\lambda^2}+\lambda}$.
\end{prop}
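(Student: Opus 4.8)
The plan is to compute the rotation number directly by tracking how a point moves along the boundary of the unit square under one application of the chess billiard map, using the fact (from the preceding proposition) that the limit defining $r(\lambda)$ is independent of the starting point and of the distance parameterization. Since we are free to choose the parameterization, I would parameterize $\partial\Omega$ by arclength with total perimeter $4$, and — crucially — I would pick a convenient starting point, say a point on the bottom edge of the square, so that the two line segments comprising $b(p,\lambda)$ can be written down explicitly.

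First I would set up coordinates with the square $[0,1]^2$ and write $\rho = \sqrt{1-\lambda^2}/\lambda$. Starting from a point $p = (t, 0)$ on the bottom edge, the first (slope $+\rho$) segment travels up and to the right until it hits either the right edge or the top edge; the second (slope $-\rho$) segment then continues. The key geometric observation is that because opposite sides of the square are parallel and the two slopes are negatives of each other, the composition of the two reflections is a rigid rotation-like shift: after one full map, the displacement along the (perimeter-4) boundary is a constant independent of $t$, equal to something proportional to the "horizontal extent" $\lambda$-piece versus the "vertical extent" $\sqrt{1-\lambda^2}$-piece. Concretely, a segment of slope $\rho$ that spans a horizontal distance $\Delta x$ and vertical distance $\rho\,\Delta x$ contributes, after unfolding the square, a boundary displacement whose horizontal part scales like $\lambda$ and whose vertical part scales like $\sqrt{1-\lambda^2}$ (after normalizing the direction vector $(\lambda, \sqrt{1-\lambda^2})$ to be a unit vector). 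Summing the contributions of the blue segment and the red segment over one map gives a net boundary displacement of $\dfrac{2\lambda + 2\sqrt{1-\lambda^2}}{\text{(something)}}$; dividing by the perimeter $4$ and simplifying yields $r(\lambda) = \dfrac{\lambda}{\lambda + \sqrt{1-\lambda^2}}$.

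The cleanest rigorous route, and the one I would actually write up, is the standard "unfolding" trick: reflect the square repeatedly across its sides so that the broken billiard trajectory becomes a single straight line of slope $\rho$ in the plane, and the boundary $\partial\Omega$ becomes a periodic zig-zag. Then one map of $b$ corresponds to translating along this line until the next return to (a copy of) the bottom edge, and the rotation number is exactly the ratio of the boundary-length swept to $4$. Because the direction vector can be normalized to $(\cos\theta, \sin\theta)$ with $\cos\theta = \lambda$ and $\sin\theta = \sqrt{1-\lambda^2}$ (this is precisely why the problem is parameterized by $\lambda$ rather than by the slope directly), the swept horizontal boundary length per map is proportional to $\lambda$ and the swept vertical boundary length is proportional to $\sqrt{1-\lambda^2}$, and the arithmetic collapses to the stated formula. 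I would verify the endpoint sanity checks: as $\lambda \to 1$ (slope $\to 0$, horizontal lines) we get $r \to 1$... actually $r \to 1$, which should be interpreted mod $1$ as $r\to 0$, i.e. no net rotation, matching horizontal back-and-forth motion; and at $\lambda = 1/\sqrt{2}$ (slope $1$) we get $r = 1/2$, matching Figure~\ref{fig:squarebilliardrotation}.

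The main obstacle is bookkeeping rather than conceptual: one must be careful about which edge each segment hits (the trajectory behavior genuinely differs depending on whether $\rho < 1$ or $\rho > 1$, equivalently $\lambda > 1/\sqrt2$ or $\lambda < 1/\sqrt2$), and one must confirm that the per-map boundary displacement is genuinely constant in the starting point $p$ — this is where invoking the cited proposition (independence of $p$) lets me avoid a case analysis by simply evaluating the displacement at one convenient $p$ and one convenient parameterization. The other delicate point is orienting the boundary consistently (counterclockwise, matching Figure~\ref{fig:squarebilliardrotation}) so that the signs work out and the rotation number is the positive quantity claimed; I would fix the orientation at the outset and track signed arclength throughout.
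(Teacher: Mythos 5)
Your overall strategy---unfold the square so that the broken trajectory becomes a straight line with unit direction $(\lambda,\sqrt{1-\lambda^2})$, then compute $r(\lambda)$ as the long-run ratio of boundary arclength swept to the number of maps---is the same as the paper's. However, the key quantitative claim in your accounting is wrong, and with it the arithmetic does not ``collapse to the stated formula.'' You assert that the swept boundary length has a horizontal part proportional to $\lambda$ and a vertical part proportional to $\sqrt{1-\lambda^2}$, giving a numerator of order $\lambda+\sqrt{1-\lambda^2}$ per map. But the number of maps per unit trajectory length is itself $\tfrac12(\lambda+\sqrt{1-\lambda^2})$ (half the total rate of grid-line crossings, which occur at rate $\lambda$ for vertical lines and $\sqrt{1-\lambda^2}$ for horizontal lines), so your ratio would be a constant independent of $\lambda$---it cannot produce $\lambda/(\lambda+\sqrt{1-\lambda^2})$. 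The correct accounting, which is the content of the paper's computation, is that a full vertical edge of $\partial\Omega$ is traversed each time the trajectory crosses a vertical grid line of the unfolding; hence the \emph{vertical} sweep rate is also proportional to $\lambda$, not to $\sqrt{1-\lambda^2}$. For the unit square the total sweep per unit trajectory length is then $\lambda+\lambda=2\lambda$, and $r(\lambda)=\frac{2\lambda}{4\cdot\tfrac12(\lambda+\sqrt{1-\lambda^2})}=\frac{\lambda}{\lambda+\sqrt{1-\lambda^2}}$, which is the paper's limit $\frac{1}{1+\rho}$ in disguise.

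A secondary gap: your ``key geometric observation'' that the per-map boundary displacement is constant in the starting point is false in the arclength parameterization. For small $\rho$ a point $(t,0)$ on the bottom edge is sent to $(0,\rho(2-t))$, and the signed arclength displacement $t+\rho(2-t)$ (measured in the direction of rotation) genuinely depends on $t$. The cited proposition guarantees that the \emph{limit} defining $r$ is independent of $p$; it does not license reading $r(\lambda)$ off a single application of $b$ at one convenient point. You must either carry out the long-run average over many iterates (as the paper does, tracking $k$ top/bottom hits and $\lfloor k/\rho\rfloor$ side hits) or exhibit an explicit conjugacy to a rigid rotation; as written, your argument does neither.
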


\begin{proof}

We call the square that $b$ acts on as $\Omega$. For the sake of simplicity, we let the side length of $\Omega$ be $1/4$. This way, the perimeter is 1, so the distance traveled is equivalent to the fractional distance traveled. 

To measure the distance traveled, we reflect $\Omega$ repeatedly over its right edge, as shown by Figure \ref{fig:reflections}. We see that this extends all of the lines in the chess billiard map. By transforming $\Omega$ to an infinite domain, which we call $S$, where points do not intersect with themselves, it is much simpler to measure distance (essentially distinguishing between displacement and distance). 

\begin{figure}[H]
    \centering
    \includegraphics[width=17cm]{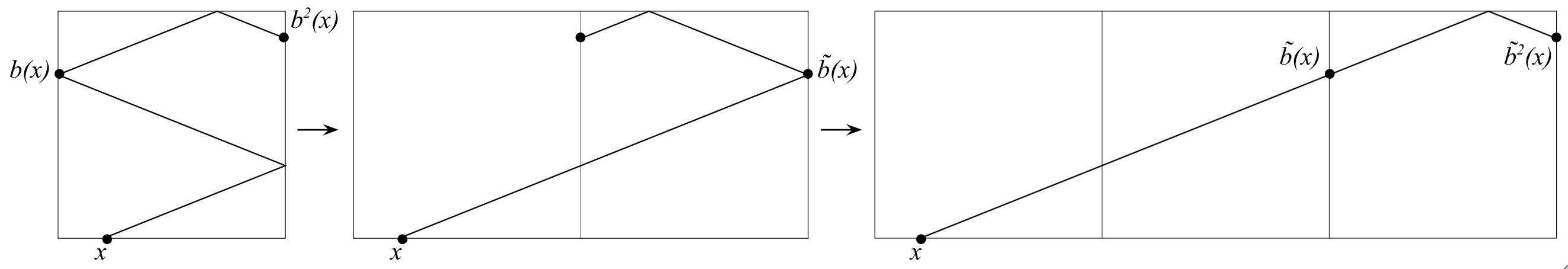}
    \caption{The reflection of a square with the chess billiard map repeatedly across its right-most edge, which extends the lines in the original square.}
    \label{fig:reflections}
\end{figure}

After these reflections, we see that lines switch direction only by hitting either the top or bottom boundary of $S$. We measure the distance traveled in each mapping by traveling along each $\frac{1}{4} \times \frac{1}{4}$ square in $S$, as shown by the highlighted red path in Figure \ref{fig:trace}. Figure \ref{fig:trace} shows how the distances we trace for one mapping of $b$ in $\Omega$ and in the reflections of $S$ are equal. Instead of wrapping around itself in $\Omega$, in $S$, the distance is measured by moving from bottom edge to top edge after each square. 

\begin{figure}[H]
    \centering
    \includegraphics[width=11cm]{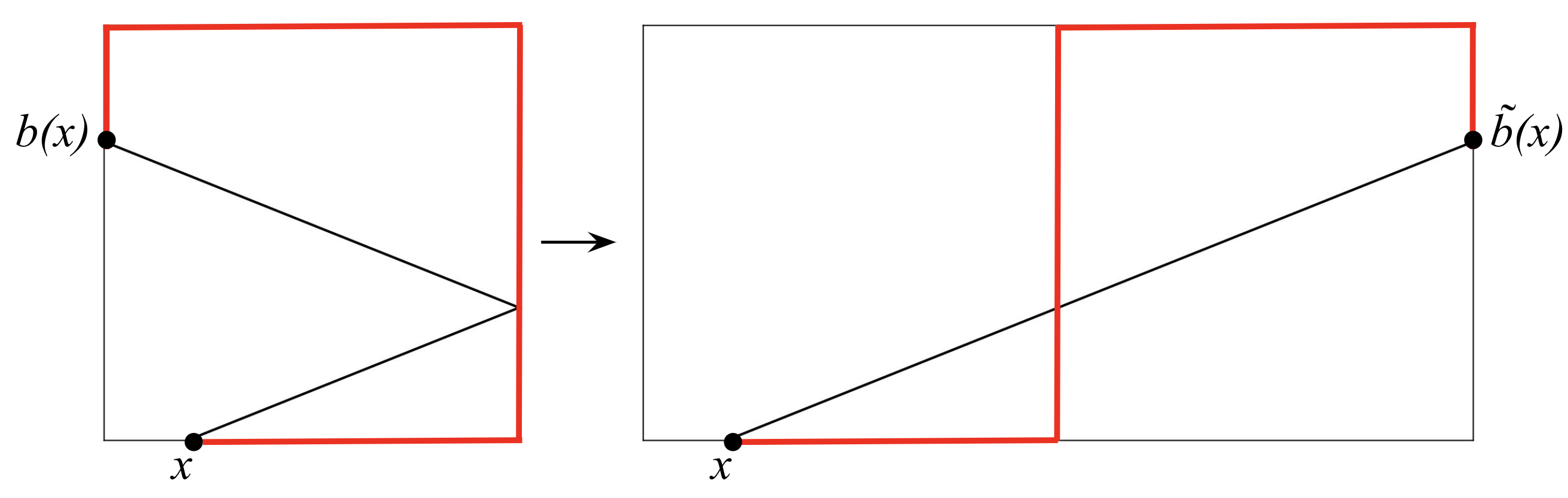}
    \caption{The total distances traveled in one mapping of $b(\lambda)$, highlighted in red, are equal in $\Omega$ and in $S$.}
    \label{fig:trace}
\end{figure}

Hence, the total distance can be found by summing the horizontal and vertical distance. We choose the starting point to be the lower-left corner (as the starting point is independent of $r(\lambda)$). Starting from that corner, let the number of times the trajectory reaches the upper or lower boundary (i.e. how many times the line switches direction from up to down) be $k$. To simplify calculations, because the slope of the lines are $\pm\frac{\sqrt{1-\lambda^2}}{\lambda}$ (given in the definition of the chess billiard map), we write the slopes as $\pm \rho = \pm\frac{\sqrt{1-\lambda^2}}{\lambda}$ for now. 

Because the magnitude of the lines is $\rho$, the $k$ switches move horizontally across a total of $\left \lfloor\dfrac{k}{\rho}\right \rfloor$ of the $\frac{1}{4} \times\frac{1}{4}$ squares. The squares have side length $\frac{1}{4}$, so the horizontal distance traveled is $\dfrac{k}{4\rho}$, and the vertical distance traveled is $\dfrac{1}{4}\left \lfloor \dfrac{k}{\rho} \right \rfloor$. 

We count the total number of mappings of $b(\lambda)$ applied by counting how many times the lines hit a side of any of the $\frac{1}{4}\times\frac{1}{4}$ squares (each mapping $b(\lambda)$ consists of two of these bounces). We observe that the top and bottom edges are hit a total of $k$ times. Using the same reasoning used to count how many squares were crossed horizontally, we find that the number of left or right edges crossed is $\left\lfloor \dfrac{k}{\rho} \right\rfloor$. Each of the mappings $b$ require two bounces, so the total number of mappings is 
\begin{center}
    $\dfrac{\left\lfloor \dfrac{k}{\rho} \right\rfloor + k}{2}$.
\end{center}
To calculate the rotation number, we take the limit
$r(\lambda) = \displaystyle \lim_{k_t \rightarrow \infty} \dfrac{d_t}{k_t}$, where $d_t$ is the distance traveled in $k_t$ iterations. Hence, this limit is
\begin{equation*}
    r(\lambda) = \displaystyle \lim_{k \rightarrow \infty} \dfrac{\dfrac{k}{4\rho} + \dfrac{1}{4}\left \lfloor \dfrac{k}{\rho} \right \rfloor}{\dfrac{\left\lfloor \dfrac{k}{\rho} \right\rfloor + k}{2}}
    = \displaystyle \lim_{k \rightarrow \infty} \dfrac{\dfrac{k}{2\rho}+\dfrac{1}{2}\left \lfloor \dfrac{k}{\rho} \right \rfloor}{\left\lfloor \dfrac{k}{\rho} \right\rfloor+k} = \lim_{k \rightarrow \infty} \dfrac{\dfrac{k}{2\rho} + \dfrac{1}{2}\cdot \dfrac{k}{\rho}}{\dfrac{k}{\rho} + k} = \lim_{k \rightarrow \infty} \dfrac{k}{k+k\rho} = \dfrac{1}{1+\rho}.
\end{equation*}
And because $\rho = \frac{\sqrt{1-\lambda^2}}{\lambda}$, we can substitute this in to conclude that
$$r(\lambda) = \dfrac{\lambda}{\sqrt{1-\lambda^2}+\lambda}.$$
\end{proof} 

\subsection{Irrational Rotation of the Map}

Here, we define what it means for the chess billiard map to have sufficiently irrational rotation. Recall that the rotation is irrational when $r(\lambda)$ is irrational. We use the definition of a \textit{Diophantine irrational} as our measure of sufficiently irrational rotation. 

\begin{definition} \label{def:diophantine}
Let $r$ be an irrational number. We call $r$ $\beta-$\textit{Diophantine} if for all rationals $p/q$, with $q \in \mathbb{Z}^+$, there exist some constants $\beta$ and $C >0$, for which $r$ satisfies the inequality
\begin{equation} \label{eq:diophantineinequality}
    \displaystyle \left\lvert r-\dfrac{p}{q} \right\rvert \ge \dfrac{C}{q^{2+\beta}}.
\end{equation}
\end{definition}

Loosely, $r$ being Diophantine means that it is far from all rational numbers. The set of $\beta-$Diophantine irrationals for a given $\beta$ is also positively dense (i.e. has full measure) \cite{hubbard}, which loosely means that most numbers are Diophantine. Alternatively speaking, if a number in $\mathbb{R}$ is chosen at random, it is a Diophantine irrational; so, this definition covers essentially all numbers. 

\subsection{The Wave Equation}

We study the chess billiard map and rotation in the context of the Poincaré problem, which is an equation modeling waves and wave evolution. It is given by 
\begin{equation} \label{eq:1}
     (\partial_t^2 \Delta + \partial_{x_2}^2)u = f(x) \cos{\lambda t}, \quad \quad u|_{t=0} = \partial_t u|_{t=0}=0, \quad \quad u|_{\partial\Omega} = 0,
\end{equation}
where $\Delta = \partial_{x_1}^2 + \partial_{x_2}^2$, $\Omega$ is a smooth convex domain in $\mathbb{R}^2$ and $\partial\Omega$ is its boundary, $f(x)$ is some forcing function in $C(\Omega)$, and $\lambda \in (0,1)$ is the frequency of the periodic forcing given by $\cos{(\lambda t)}$. We call ``$u
$" the solution to the Poincaré problem, and it describes the behavior of the waves (more specifically, through describing fluid velocity). 

This equation is directly correlated with the chess billiard flow. We can establish this relationship by observing resolvents of the differential operator of the equation near the resonant frequency of $\lambda$. This has also been confirmed with experiments that show the waves evolve onto linear paths and trajectories and form the chess billiard flow \cite{Hazewinkle}, where the slopes in $b$ (i.e., $\frac{\lambda}{\sqrt{1-\lambda^2}}$) are determined by the $\lambda$ in the $\cos{(\lambda t)}$ forcing term \cite{maasetal}. The function $u$ is what we represent by the chess billiard flow in the fluid, and it is what we show is highly differentiable and smooth. 

In order to study this smoothness, we use the geometric properties of the chess billiard flow; most importantly, the rotation number. By looking at irrational rotation (i.e. irrational $r(\lambda)$) in the chess billiard flow, we can better understand when $u$ will be smooth or unsmooth. 

\subsection{Differentiability and Smoothness}

We say that a function $u$ is smooth when it is infinitely differentiable. To describe a function's differentiability (i.e. regularity), we use the notation $C^q,$ which means that the function is differentiable $q$ times. A smooth function is in $C^\infty$. From here, we will define the Sobolev spaces, which gives us a metric that allows us to quantify what the maximum $q$ in $C^q$ is for a given function. 

\begin{definition}
Let $\Omega=[0,1]\times[0,1]$. We define $L^2(\Omega)$ as 
\begin{equation*}
    L^2(\Omega)=\{f \text{ measurable}: \int \lvert f \rvert^2 < \infty\}.
\end{equation*}
\end{definition}

From here, we define the Sobolev spaces $H^s$. 

\begin{definition} \label{def:sobolev}
The Sobolev space $H^s(\Omega)$ is defined as
\begin{equation*}
    H^s(\Omega) = \{ f \in L^2: \displaystyle \sum_{k=-\infty}^\infty (1+k_1^2+k_2^2)^s \lvert \hat{f}(k_1,k)2) \rvert^2 < \infty\},
\end{equation*}
where $\hat{f}(k_1,k_2)$ are the Fourier coefficients of $f$ (see Section blank). 
\end{definition}

To connect the Sobolev spaces to a function's regularity, we state the following proposition from Friedlander et al. \cite{friedlander}. 

\begin{prop} \label{prop:htoc}
If a function $g$ is in $H^s$, then it is also in $C^{s-1}$. 
\end{prop}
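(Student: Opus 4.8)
This is the Sobolev embedding theorem in two dimensions, and since $\Omega=[0,1]^2$ is being treated via Fourier series (i.e.\ as the torus $\mathbb{T}^2$), I would prove it directly from Definition~\ref{def:sobolev}. The plan is to show that for $g\in H^s$ and every multi-index $\alpha=(\alpha_1,\alpha_2)$ with $|\alpha|\le s-1$, the formally differentiated Fourier series $\sum_k (2\pi i k)^\alpha \hat g(k)\, e^{2\pi i k\cdot x}$ converges absolutely and uniformly. Granting this for all such $\alpha$, the partial sums of the Fourier series of $g$ form a sequence of trigonometric polynomials that converges uniformly together with all of its derivatives of order up to $s-1$; by the standard theorem on term-by-term differentiation of uniformly convergent series, the limit is a $C^{s-1}$ function, and since it also equals $g$ in $L^2$, it is the desired continuous representative of $g$.

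The heart of the argument is a single application of Cauchy--Schwarz. Writing $\langle k\rangle^2 = 1+k_1^2+k_2^2$, for $|\alpha|\le s-1$ we have
\begin{equation*}
\sum_{k\in\mathbb{Z}^2}\bigl|(2\pi i k)^\alpha \hat g(k)\bigr|
= \sum_k \bigl((2\pi)^{|\alpha|}|k^\alpha|\,\langle k\rangle^{-s}\bigr)\bigl(\langle k\rangle^{s}|\hat g(k)|\bigr)
\le (2\pi)^{|\alpha|}\Bigl(\sum_k |k^\alpha|^2\langle k\rangle^{-2s}\Bigr)^{1/2}\Bigl(\sum_k \langle k\rangle^{2s}|\hat g(k)|^2\Bigr)^{1/2}.
\end{equation*}
The second factor is exactly $\|g\|_{H^s}<\infty$ by hypothesis. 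For the first factor I would use $|k^\alpha|^2\le |k|^{2|\alpha|}\le \langle k\rangle^{2|\alpha|}$, so that it suffices to know $\sum_{k\in\mathbb{Z}^2}\langle k\rangle^{-2(s-|\alpha|)}<\infty$.

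Convergence of that last series is where the dimension enters. Comparing the lattice sum with the integral $\int_{\mathbb{R}^2}(1+|x|^2)^{-t}\,dx = 2\pi\int_0^\infty r(1+r^2)^{-t}\,dr$, one checks it is finite precisely when $t>1$; here $t=s-|\alpha|$, so the condition $s-|\alpha|>1$ is exactly what makes every derivative series up to order $s-1$ absolutely and uniformly convergent, which closes the argument.

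I expect the main obstacle to be the borderline bookkeeping around the exponent: the embedding genuinely costs $n/2=1$ derivative in dimension two, so the clean statement is that $H^s\hookrightarrow C^m$ for every integer $m<s-1$, and the proposition should be read in that sense (or invoked only when $s$ strictly exceeds the desired regularity plus one). The other point needing a little care is the passage from $g$ as an equivalence class in $L^2$ to a genuine continuous function; this is handled by observing that the uniformly convergent Fourier series defines a continuous function whose Fourier coefficients coincide with those of $g$, hence which agrees with $g$ almost everywhere.
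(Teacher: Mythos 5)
The paper offers no proof of this proposition at all --- it is imported as a black box from Friedlander et al.\ \cite{friedlander} --- so there is nothing internal to compare your argument against; what you have written is the standard Fourier-side proof of the Sobolev embedding, and it is essentially complete: Cauchy--Schwarz against the weight $\langle k\rangle^{2s}$ from Definition~\ref{def:sobolev}, reduction to the convergence of $\sum_{k\in\mathbb{Z}^2}\langle k\rangle^{-2(s-|\alpha|)}$, and comparison with the radial integral in dimension two. The one thing to stress is that the ``borderline bookkeeping'' you flag at the end is not a cosmetic issue but a genuine defect of the proposition as stated. Your computation shows the order-$|\alpha|$ derivative series converges absolutely when $s-|\alpha|>1$, and taking $|\alpha|=s-1$ lands exactly on the failing endpoint $s-|\alpha|=1$, where the lattice sum diverges logarithmically. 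The statement is in fact false as written: already at $s=1$ it would assert $H^1\subset C^0$, but there are unbounded $H^1$ functions on the two-torus. The correct conclusion of your argument is $H^s\hookrightarrow C^m$ for every integer $m<s-1$, i.e.\ $C^{s-2}$ for integer $s$, with a loss of strictly more than $n/2=1$ derivatives. This propagates downstream: the quantitative claim of Theorem~\ref{theorem:bigtheorem} inherits the same one-derivative (plus endpoint) loss, though Corollary~\ref{corollary:smoothcorollary} is unaffected since there only the qualitative implication for all $s$ is used. So: your proof is correct, it supplies an argument the paper omits, and your caveat should be promoted from a footnote to a correction of the statement itself.
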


Finally, given this set up, we proceed to the proof of our main result. 

\section{Proof of Main Result} \label{section3}

The main result we prove in this section regards the smoothness of the solution $u$ of the Poincaré problem in the square for different values of $\lambda$. We show that the regularity (i.e. differentiability) of $u$ relates directly to the regularity of the forcing function $f$, and that $u$ is highly regular or smooth for sufficiently irrational $r(\lambda)$ and a fixed $f$. More formally, we show the following theorem. 

\begin{theorem}\label{theorem:bigtheorem}
Given a forcing function $f(x) \in C^s [0,1]\times[0,1]$ and a $\beta$-Diophantine rotation number $r(\lambda)$ for some $\beta$ and $C > 0$, the solution $u(t)$ of the Poincaré problem in the square is in $C^{s-1-\beta}$. 
\end{theorem}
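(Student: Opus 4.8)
The plan is to expand both the forcing function and the solution in the Fourier basis adapted to the boundary condition $u|_{\partial\Omega}=0$ on the square, namely $\{\sin(\pi k_1 x_1)\sin(\pi k_2 x_2)\}_{k_1,k_2\ge 1}$, and to track how the differential operator $\partial_t^2\Delta+\partial_{x_2}^2$ acts on each mode. First I would write $f(x)=\sum_{k}\hat f(k)\,e_k(x)$ and seek $u(t,x)=\sum_k \hat u_k(t)\,e_k(x)$; plugging into \eqref{eq:1}, each coefficient solves a forced ODE of the form $-|k|^2\,\hat u_k'' - k_2^2\,\hat u_k = \hat f(k)\cos(\lambda t)$ with zero initial data, so that $\hat u_k(t) = \dfrac{\hat f(k)}{k_2^2 - \lambda^2|k|^2}\bigl(\cos(\lambda t) - \cos(\omega_k t)\bigr)$ where $\omega_k^2 = k_2^2/|k|^2$, with the obvious resolvent-type modification in the resonant case. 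The factor controlling regularity is the small denominator $D_k := k_2^2-\lambda^2|k|^2 = |k|^2(\omega_k^2-\lambda^2)$; the gain or loss of derivatives of $u$ relative to $f$ is governed by how small $|D_k|$ can be as $|k|\to\infty$.

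Next I would convert the Diophantine hypothesis on $r(\lambda)$ into a quantitative lower bound on $|D_k|$. Since $r(\lambda)=\lambda/(\sqrt{1-\lambda^2}+\lambda)$ by \Cref{prop:rotexplicit}, the rationality of $r$ is equivalent (via an explicit Möbius-type change of variable) to a rationality condition on $\lambda^2$, or more precisely on the quantity $\omega_k = k_2/|k|$ that appears; the ratio $k_2/k_1$ ranging over positive rationals corresponds exactly to the periodic directions of the billiard. The key estimate will be: for every $k=(k_1,k_2)$,
\[
\Bigl|\,\tfrac{k_2^2}{|k|^2}-\lambda^2\,\Bigr| \;\ge\; \frac{C'}{|k|^{2+\beta}},
\]
obtained by relating $\left|\tfrac{k_2}{k_1}-\text{(rational)}\right|$ to $\left|r(\lambda)-\tfrac{p}{q}\right|$ with $q$ comparable to $|k|$, and then invoking \eqref{eq:diophantineinequality}. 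Combining this with $|D_k|=|k|^2\bigl|\tfrac{k_2^2}{|k|^2}-\lambda^2\bigr|\gtrsim |k|^{-\beta}$, and using that $|\cos(\lambda t)-\cos(\omega_k t)|$ is bounded (uniformly in $t$, for $t$ in a compact interval, with at most polynomially growing factors one can absorb), I would estimate
\[
(1+|k|^2)^{s-1-\beta}\,|\hat u_k(t)|^2 \;\lesssim\; (1+|k|^2)^{s}\,|\hat f(k)|^2,
\]
so that $f\in H^{s}$ (which follows from $f\in C^s$) gives $u(t)\in H^{s-\beta}$, hence $u(t)\in C^{s-1-\beta}$ by \Cref{prop:htoc}. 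The smoothness statement (when $f\in C^\infty$, $u(t)\in C^\infty$) then follows by letting $s\to\infty$, since only finitely many derivatives are ever lost for each fixed target regularity.

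The main obstacle I expect is the arithmetic bridge between the Diophantine condition on $r(\lambda)$ and the small-denominator bound on $D_k=k_2^2-\lambda^2|k|^2$: one must carefully check that a good rational approximation to $k_2^2/|k|^2$ (or to $\lambda^2$) produces a good rational approximation to $r(\lambda)$ with controlled denominator, so that the exponent $2+\beta$ transfers with the correct power of $|k|$ and no extra loss. A secondary technical point is handling the $t$-dependence: the Duhamel solution can contain factors that grow like $t$ near near-resonant modes, so the estimate should be stated for $t$ in a fixed compact interval, or one must track the (at most linear) growth and confirm it does not degrade the Sobolev exponent. Verifying positivity of all denominators (i.e. that $\lambda^2$ is genuinely irrational of the right type so $D_k\neq 0$ for all $k$) is immediate once the arithmetic bridge is in place.
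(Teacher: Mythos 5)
Your proposal is correct and follows the same overall strategy as the paper: expand in Fourier modes in $x$, solve the resulting forced ODE for each mode explicitly, convert the Diophantine condition on $r(\lambda)=\lambda/(\sqrt{1-\lambda^2}+\lambda)$ into a small-denominator lower bound on $D_k=k_2^2-\lambda^2|k|^2$ via the factorization $\lambda^2k_1^2-(1-\lambda^2)k_2^2=\bigl(|k_1|\lambda-|k_2|\sqrt{1-\lambda^2}\bigr)\bigl(|k_1|\lambda+|k_2|\sqrt{1-\lambda^2}\bigr)$ with $p/q=k_2/(k_1+k_2)$, and finish with the Sobolev embedding of \Cref{prop:htoc}. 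The execution differs in three ways that are worth noting. First, you use the sine basis adapted to $u|_{\partial\Omega}=0$ rather than the exponential basis; the arithmetic is identical but your choice actually respects the boundary condition. Second, your mode solution $\hat u_k(t)=\tfrac{\hat f(k)}{D_k}(\cos\lambda t-\cos\omega_k t)$ correctly satisfies the zero initial data, whereas the paper's formula \eqref{eq:fourierexplicit} keeps only the $\cos\lambda t$ term and does not vanish at $t=0$; the extra term is bounded by $1$ so the estimates are unaffected, and your worry about linear-in-$t$ growth is moot since there is no exact resonance under the Diophantine hypothesis. Third, and most substantively, you retain the full size $\sim|k|$ of the factor $|k_1|\lambda+|k_2|\sqrt{1-\lambda^2}$, getting $|D_k|\gtrsim|k|^{-\beta}$, while the paper only bounds that factor below by $1$ and gets $|D_k|\gtrsim|k|^{-1-\beta}$. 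Your sharper bound yields $u\in H^{s-\beta}$ and hence $u\in C^{s-1-\beta}$ directly from \Cref{prop:htoc}; the paper's weaker bound gives $u\in H^{s-1-\beta}$, which by \Cref{prop:htoc} only yields $C^{s-2-\beta}$, and the paper's final step recovering the exponent $s-1-\beta$ implicitly uses the converse embedding $C^s\subset H^{s+1}$ without justification. So your route, once the arithmetic bridge you flag is written out (it is exactly the computation in \Cref{lemma:biglemma}), proves the stated exponent cleanly.
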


We build up to the proof in the next subsections. In Subsection \ref{solvefourier}, we establish the norms for the Fourier series, and explicitly calculate the Fourier coefficients of $u$. In Subsection \ref{boundfourier}, we establish the irrational condition on $r(\lambda)$ and find an upper bound for the Fourier coefficients of $u$. Finally, in Subsection \ref{regularity}, we use the Sobolev space definition and propositions to relate the smoothness of $f$ to $u$. 

\subsection{Solving for the Fourier coefficients of $u$} \label{solvefourier}

Recall that the Fourier coefficients of a periodic function $f(x)$ where $x = (x_1, x_2) \in [0,1] \times [0,1]$ are
\begin{equation*}
    \hat{f}(k_1,k_2) = \displaystyle \int_0^1 \int_0^1 f(x_1,x_2)e^{-2\pi i(x_1k_1+x_2k_2)}dx_1 dx_2, 
\end{equation*}
for $k_1, k_2 \in \mathbf{Z}$.

We use these coefficients to write the Fourier Series of $f$, which decomposes $f$ into periodic functions of smaller amplitudes, and is useful in differentiation and determining differentiability. We decompose $f$ as
\begin{equation*}
    f(x_1,x_2) = \displaystyle \sum_{k_1=-\infty}^{\infty} \sum_{k_2=-\infty}^{\infty}  \hat{f}(k_1,k_2) e^{2\pi i(x_1k_1+x_2k_2)},
\end{equation*}
where $\hat{f}$ are the Fourier coefficients.

To compute the Fourier coefficients of $u$, we first formally take a Fourier transform of both sides of the Poincaré problem, restated below: 
$$(\partial_t^2 \Delta + \partial_{x_2}^2)u = f(x) \cos{\lambda t}, \quad \quad u|_{t=0} = \partial_t u|_{t=0}=0, \quad \quad u|_{\partial\Omega} = 0,$$
where $\Delta=\partial_{x_1}^2 + \partial_{x_2}^2$. 

We use the following proposition, which follows from the one variable case presented in Beals \cite{analysistxtbk}, to take the Fourier transform of the derivative of a function. 

\begin{prop} \label{prop:fourierderivative}
For a continuous periodic function $f$ on $(x_1,x_2) \in [0,1] \times [0,1]$ with a continuous derivative $f'$, we have
\begin{center}
    $\widehat{\dfrac{\partial}{\partial x_1}f} = -2\pi i k_1 \hat{f}(x_1,x_2)$ and
    $\widehat{\dfrac{\partial}{\partial x_2}f} = -2\pi i k_2 \hat{f}(x_1,x_2)$.
\end{center}
\end{prop}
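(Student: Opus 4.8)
The plan is to deduce the two–dimensional identity from the classical one–variable statement in Beals~\cite{analysistxtbk} by regarding the double integral defining $\hat f(k_1,k_2)$ as an iterated integral and applying the one–variable result in whichever coordinate the derivative acts on. It suffices to treat $\partial/\partial x_1$; the case of $\partial/\partial x_2$ follows verbatim after interchanging the roles of the two coordinates.

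First I would record the one–variable ingredient. If $g$ is continuous and $1$–periodic on $[0,1]$ with continuous derivative, then integrating by parts in $\int_0^1 g'(x)\,e^{-2\pi i k x}\,dx$ moves the derivative off $g$ onto the exponential, producing a factor proportional to $k$ together with the boundary term $\big[g(x)e^{-2\pi i k x}\big]_0^1$. This boundary term vanishes because $g(0)=g(1)$ by periodicity and $e^{-2\pi i k}=1$ for $k\in\mathbb Z$, which is exactly the one–variable version of the claimed formula, with the sign of the factor dictated by the Fourier convention fixed after Definition~\ref{def:sobolev}.

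To pass to two variables I would fix $x_2\in[0,1]$ and apply this to $g(x_1)=f(x_1,x_2)$, which is $1$–periodic in $x_1$ with continuous derivative $\partial_{x_1}f(\cdot,x_2)$ by hypothesis; this rewrites the inner $x_1$–integral of $\widehat{\partial_{x_1}f}(k_1,k_2)$ as $-2\pi i k_1\int_0^1 f(x_1,x_2)e^{-2\pi i x_1 k_1}\,dx_1$, and integrating against $e^{-2\pi i x_2 k_2}$ over $x_2$ yields the assertion; the argument for $\partial_{x_2}f$ is the mirror image. The one point that needs genuine care---and the step I would flag as the main obstacle---is justifying that $\widehat{\partial_{x_1}f}$ may be evaluated as this iterated integral and that the $k_1$–factor can be pulled out past the $x_2$–integration. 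Both are immediate once one invokes the hypothesis that $f$ and its first partial derivatives are continuous on the compact square $[0,1]^2$, so that every integrand in sight is bounded and Fubini's theorem applies; beyond this routine bookkeeping there is no analytic difficulty.
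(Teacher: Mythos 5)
Your proof is correct and follows exactly the route the paper intends: the paper offers no written proof of this proposition, merely asserting that the two-variable identity follows from the one-variable case in Beals, and your integration-by-parts-plus-Fubini argument is the standard way to fill in that reduction. One small caution: with the paper's convention $\hat f(k_1,k_2)=\int_0^1\int_0^1 f\,e^{-2\pi i(x_1k_1+x_2k_2)}\,dx_1\,dx_2$, integrating by parts produces the factor $+2\pi i k_1$ rather than $-2\pi i k_1$, so the sign in your final display (which matches the proposition as printed) is the opposite of what your own computation yields --- a discrepancy that originates in the paper's statement and is harmless here, since only the second derivatives, and hence the factors $-4\pi^2 k_j^2$, enter the application.
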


We use this on Equation (\ref{eq:1}). Firstly, using Proposition \ref{prop:fourierderivative}, it follows that $\widehat{\dfrac{\partial}{\partial x_1}u} = 2 \pi i k_1 \hat{u}$ and $\widehat{\dfrac{\partial}{\partial x_2}u} = 2 \pi i k_2 \hat{u}$. Hence, $\widehat{\partial_{x_1}^2 u} = -4 \pi^2 k_1^2 \hat{u}$ and $\widehat{\partial_{x_2}^2 u} = -4 \pi^2 k_2^2 \hat{u}$.

Taking the Fourier transform of both sides of Equation (\ref{eq:1}) in $x$, the right hand side simply is $\hat{f}(x) \cos \lambda t$ (because $\cos \lambda t$ is not a function of $x$). 

On the left hand side, we have 
$$(\partial_t^2(k_1^2+k_2^2)+k_2^2) \cdot (-4\pi^2) \hat{u},$$
and by factoring out all instances of $-4 \pi^2$,  we conclude that $u$ is a solution to
\begin{equation} \label{eq:3}
    -4\pi^2 (\partial_t^2 (k_1^2+k_2^2) + k_2^2) \hat{u} = \hat{f}(k_1,k_2)\cos \lambda t.
\end{equation}

Now, Equation (\ref{eq:3}) can be solved as an ordinary differential equation, which we do with Wolfram Alpha. Given the initial conditions presented in Equation (\ref{eq:1}), we solve $\hat{u}(t,k_1,k_2)$ explicitly as a function of $t$ for each $k_1, k_2 \in \mathbb{Z}_{\neq 0}$, as 
\begin{equation} \label{eq:fourierexplicit}
    \hat{u}(t,k_1,k_2) = \dfrac{\hat{f}(k_1,k_2) \cos \lambda t}{4 \pi^2 (-k_2^2 + k_1^2 \lambda^2 + k_2^2 \lambda^2)}.
\end{equation}

Now that the Fourier coefficients are solved, we explicitly find an upper bound for them.

\subsection{Bounding the Fourier coefficients} \label{boundfourier}

Now, we bound the Fourier coefficients of $u$. We wish to bound them because we want to show that the coefficients $\hat{u}(k)$ decay rapidly for large $k$, which suggests that $u$ would be smooth. 

In this subsection, we prove the following proposition bounding $\hat{u}$ in terms of $\hat{f}$ and $\lambda$. 

\begin{prop}
When $r(\lambda)$ is $\beta$-Diophantine for some $C$ and $\beta > 0$, we have
\begin{equation*} \label{prop:bigboundingprop}
    \lvert \hat{u}(t,k) \rvert < \vert \hat{f}(x)\rvert \cdot C \cdot \left(\sqrt{1+k_1^2+k_2^2}\right)^{1+\beta}.
\end{equation*}
\end{prop}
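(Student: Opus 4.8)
The plan is to start from the explicit formula for the Fourier coefficients in Equation~(\ref{eq:fourierexplicit}) and control the denominator $4\pi^2(-k_2^2 + k_1^2\lambda^2 + k_2^2\lambda^2)$ from below. Since $|\cos\lambda t|\le 1$, we immediately get
\begin{equation*}
    |\hat u(t,k_1,k_2)| \le \frac{|\hat f(k_1,k_2)|}{4\pi^2\,|k_1^2\lambda^2 - k_2^2(1-\lambda^2)|},
\end{equation*}
so everything reduces to bounding $|k_1^2\lambda^2 - k_2^2(1-\lambda^2)|$ away from zero. The key is to factor this as a difference of squares: writing $\rho = \sqrt{1-\lambda^2}/\lambda$ as in the definition of the chess billiard map, we have $k_1^2\lambda^2 - k_2^2(1-\lambda^2) = \lambda^2(k_1^2 - k_2^2\rho^2) = \lambda^2(k_1 - k_2\rho)(k_1 + k_2\rho)$. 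So I would bound each factor separately.

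The second factor $|k_1 + k_2\rho|$ is easy — it is at most $|k_1| + |k_2|\rho \le C'\sqrt{1+k_1^2+k_2^2}$ for a constant depending only on $\lambda$ (via $\rho$), by Cauchy–Schwarz. The first factor $|k_1 - k_2\rho|$ is where the Diophantine hypothesis enters: I need to translate ``$r(\lambda)$ is $\beta$-Diophantine'' into a statement about $\rho$. From Proposition~\ref{prop:rotexplicit}, $r(\lambda) = \frac{1}{1+\rho}$, so $\rho = \frac{1}{r(\lambda)} - 1 = \frac{1-r(\lambda)}{r(\lambda)}$, a Möbius transformation of $r(\lambda)$ with integer coefficients; such maps send Diophantine numbers to Diophantine numbers (with possibly adjusted constant, same exponent $\beta$), so $\rho$ is $\beta$-Diophantine as well. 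Then for $k_2 \neq 0$, applying Definition~\ref{def:diophantine} with the rational $k_1/k_2$ gives $|\rho - k_1/k_2| \ge C/|k_2|^{2+\beta}$, hence $|k_1 - k_2\rho| \ge C/|k_2|^{1+\beta} \ge C/(\sqrt{1+k_1^2+k_2^2})^{1+\beta}$. Combining, $|k_1^2\lambda^2 - k_2^2(1-\lambda^2)| \ge \lambda^2 \cdot \frac{C}{(\sqrt{1+k_1^2+k_2^2})^{1+\beta}} \cdot \frac{1}{C'\sqrt{1+k_1^2+k_2^2}}$, which after absorbing constants and using $|\hat f(k_1,k_2)| \le |\hat f(x)|$ (i.e. bounding the coefficient by a suitable norm of $f$) yields the claimed inequality.

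The main obstacle — really the only substantive point — is the transfer of the Diophantine condition from $r(\lambda)$ to $\rho$, and more subtly the case analysis hidden in the denominator: I should check what happens when $k_2 = 0$ (then the denominator is just $\lambda^2 k_1^2$, which is bounded below trivially, so this case is fine and in fact gives faster decay) and make sure the constant $C$ in the final bound legitimately only depends on the Diophantine constant for $r(\lambda)$, on $\beta$, and on $\lambda$. I would also note the harmless abuse in the statement where ``$|\hat f(x)|$'' should be read as a fixed bound on all Fourier coefficients of $f$ (e.g. $\sup_k |\hat f(k)|$ or $\|f\|_{L^1}$), which is finite since $f \in C(\Omega)$; the inequality is then uniform in $t$ because the $\cos\lambda t$ factor only helps.
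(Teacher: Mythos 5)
Your overall strategy is the same as the paper's: start from the explicit formula for $\hat u$ in Equation (\ref{eq:fourierexplicit}), factor the denominator as a difference of squares, and use the Diophantine hypothesis to push the ``resonant'' factor away from zero (the paper isolates this step as Lemma \ref{lemma:biglemma}). There is, however, a genuine gap in how you combine the two factors. You lower-bound $|k_1-k_2\rho|$ by $C/|k_2|^{1+\beta}$ and upper-bound $|k_1+k_2\rho|$ by $C'\sqrt{1+k_1^2+k_2^2}$, and then conclude that the product is at least the first bound divided by the second. That inequality is false in general: a lower bound on a product requires a lower bound on each factor, and dividing by an upper bound of one factor goes in the wrong direction (take $|A|=10$ and $|B|=10^{-3}\le 100$; then $|AB|=10^{-2}$, which is not $\ge 10/100$). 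Moreover, even if one accepts the step, the resulting denominator bound is of size $(\sqrt{1+k_1^2+k_2^2})^{-(2+\beta)}$, so after taking reciprocals you would only get $|\hat u|\lesssim |\hat f|\,(\sqrt{1+k_1^2+k_2^2})^{2+\beta}$, one full power worse than the stated conclusion.

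The fix is to bound the second factor from \emph{below} by a constant, which is what the paper does: writing the factorization in terms of $|k_1|,|k_2|$ as $(|k_1|\lambda-|k_2|\sqrt{1-\lambda^2})(|k_1|\lambda+|k_2|\sqrt{1-\lambda^2})$, the second factor is automatically $\ge \lambda+\sqrt{1-\lambda^2}\ge 1$ for nonzero integers, so its reciprocal is $\le 1$ and only the Diophantine factor contributes the $(1+\beta)$ power. (With your signed factorization you would instead need a short case analysis showing that at least one of $|k_1\pm k_2\rho|$ is bounded below by $\min(1,\rho)$ while the other obeys the Diophantine bound.) Your other main point --- transferring the Diophantine condition from $r(\lambda)$ to $\rho$ via the integer M\"obius map $\rho=(1-r)/r$ --- is a legitimate route but is left unproved; the paper sidesteps it entirely by substituting $p/q=|k_2|/(|k_1|+|k_2|)$ directly into the Diophantine inequality for $r(\lambda)$ and clearing denominators, which produces the needed lower bound on $|k_1|\lambda-|k_2|\sqrt{1-\lambda^2}$ with no transfer lemma. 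Your observations about the $k_2=0$ case and the uniformity in $t$ are fine.
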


To establish this upper bound, we first prove an auxiliary lemma. 

\begin{lemma} \label{lemma:biglemma}
If $k_1,k_2 \in \mathbb{Z}_{\neq 0}$ and $r(\lambda)$ is $\beta$-Diophantine for some $\beta, C > 0$, then
\begin{center}
$\displaystyle \left\lvert \dfrac{1}{-k_2^2+k_1^2\lambda^2+k_2^2\lambda^2} \right\rvert < C\left(\sqrt{1+k_1^2+k_2^2}\right)^{1+\beta}$.
\end{center}
\end{lemma}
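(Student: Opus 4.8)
The plan is to factor the denominator $-k_2^2 + k_1^2\lambda^2 + k_2^2\lambda^2 = (k_1^2+k_2^2)\lambda^2 - k_2^2$ and relate it to how far $\lambda^2$ — equivalently, how far $r(\lambda)$ — sits from a rational number with denominator controlled by $k_1^2+k_2^2$. Writing $\rho = \sqrt{1-\lambda^2}/\lambda$, Proposition \ref{prop:rotexplicit} gives $r(\lambda) = 1/(1+\rho)$, and one computes that $\lambda^2 = 1/(1+\rho^2)$, so $\lambda^2$ is a fixed Möbius-type function of $r(\lambda)$. The first step is therefore to make this change of variables precise: express $\lambda^2$ in terms of $r = r(\lambda)$ (I expect something like $\lambda^2 = \tfrac{r^2 \text{ or } (1-r)^2}{\text{quadratic in } r}$ after clearing $\rho$), and record that this map is smooth with nonvanishing derivative on the relevant interval, so a lower bound on $|r - p/q|$ transfers to a lower bound on $|\lambda^2 - (\text{corresponding rational})|$ with only a bounded multiplicative loss.

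Next I would rewrite the denominator as $(k_1^2+k_2^2)\big(\lambda^2 - \tfrac{k_2^2}{k_1^2+k_2^2}\big)$ and bound $|\lambda^2 - \tfrac{k_2^2}{k_1^2+k_2^2}|$ from below. The rational $\tfrac{k_2^2}{k_1^2+k_2^2}$ has denominator $q = k_1^2+k_2^2$ (or a divisor of it), so applying the Diophantine condition for $r(\lambda)$ — pulled back through the change of variables from the previous step — yields $|\lambda^2 - \tfrac{k_2^2}{k_1^2+k_2^2}| \geq C'/q^{2+\beta} = C'/(k_1^2+k_2^2)^{2+\beta}$ for a possibly adjusted constant $C'$. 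Combining, $|{-k_2^2+k_1^2\lambda^2+k_2^2\lambda^2}| \geq C' (k_1^2+k_2^2) \cdot (k_1^2+k_2^2)^{-(2+\beta)} = C'(k_1^2+k_2^2)^{-(1+\beta)}$, so its reciprocal is at most $(C')^{-1}(k_1^2+k_2^2)^{1+\beta} \leq (C')^{-1}(1+k_1^2+k_2^2)^{1+\beta} = (C')^{-1}\big(\sqrt{1+k_1^2+k_2^2}\big)^{2(1+\beta)}$. After absorbing constants and (if needed) renaming $\beta$, this gives the claimed bound; I would note that the exponent in the statement is $\big(\sqrt{1+k_1^2+k_2^2}\big)^{1+\beta}$, so either the square root is doing the job of halving $2(1+\beta)$ up to the choice of $\beta$, or a slightly sharper accounting of the denominator size is needed — I would reconcile the exponents carefully at this stage.

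The main obstacle is the translation of the Diophantine hypothesis through the nonlinear change of variables, and in particular making sure the rational approximant that arises, $\tfrac{k_2^2}{k_1^2+k_2^2}$, really does correspond under the $\lambda^2 \leftrightarrow r$ map to a rational $p/q$ with $q$ comparable to $k_1^2+k_2^2$, so that Definition \ref{def:diophantine} applies with the right power of $q$. Concretely: if $\lambda^2 = \phi(r)$ with $\phi$ a fixed rational function with integer (or rational) coefficients, then $\lambda^2$ close to $\tfrac{k_2^2}{k_1^2+k_2^2}$ forces $r$ close to $\phi^{-1}\big(\tfrac{k_2^2}{k_1^2+k_2^2}\big)$, which is again rational with denominator bounded by a constant times $k_1^2+k_2^2$; one then invokes the Diophantine bound for $r$ and pushes it back. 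I would also need to handle the edge case where the denominator could in principle be large but the naive bound is lossy — but since we only want an upper bound on the reciprocal this direction is the easy one. Finally, I would double-check that $\lambda^2 - \tfrac{k_2^2}{k_1^2+k_2^2}$ cannot be made small by $\lambda^2$ being close to $0$ or $1$ rather than to an interior rational; since $\lambda \in (0,1)$ is fixed and $r(\lambda)$ is a fixed irrational, this is not an issue, but it deserves a sentence.
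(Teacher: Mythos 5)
Your overall strategy---reading the denominator as a Diophantine distance from $\lambda^2$ (equivalently $r(\lambda)$) to a rational determined by $k_1,k_2$---is the right one, but as written the quantitative bookkeeping does not close, and the mismatch you flagged at the end is a real gap, not one that can be reconciled by absorbing constants or renaming $\beta$. The problem is the denominator you feed into Definition \ref{def:diophantine}. You take the approximating rational to be $k_2^2/(k_1^2+k_2^2)$ with denominator $q=k_1^2+k_2^2$ and obtain the lower bound $C'/(k_1^2+k_2^2)^{2+\beta}$, which after multiplying by $k_1^2+k_2^2$ yields a reciprocal bound of $(k_1^2+k_2^2)^{1+\beta}=\bigl(\sqrt{k_1^2+k_2^2}\bigr)^{2(1+\beta)}$ --- exponent $2+2\beta$ instead of the claimed $1+\beta$. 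The point you are missing is that under your own change of variables $\lambda^2=\phi(r)$ with $\phi(r)=r^2/(2r^2-2r+1)$, the preimage $\phi^{-1}\bigl(k_2^2/(k_1^2+k_2^2)\bigr)$ is $|k_2|/(|k_1|+|k_2|)$, whose denominator is $|k_1|+|k_2|\asymp\sqrt{k_1^2+k_2^2}$, not $k_1^2+k_2^2$. The Diophantine condition must be applied to \emph{that} rational, giving $|r-|k_2|/(|k_1|+|k_2|)|\ge C(|k_1|+|k_2|)^{-(2+\beta)}$; pushing this through $\phi$ by the mean value theorem and multiplying by $k_1^2+k_2^2\ge\tfrac12(|k_1|+|k_2|)^2$ in fact yields a reciprocal bound of order $(|k_1|+|k_2|)^{\beta}$, which is even stronger than the lemma. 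So the square root in the statement is exactly the halving of the denominator's size that you suspected, and your argument only works once you identify the correct approximant.

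The paper's proof avoids the change of variables and the MVT entirely. It factors the quadratic into linear factors, $-k_2^2+(k_1^2+k_2^2)\lambda^2=\bigl(|k_1|\lambda-|k_2|\sqrt{1-\lambda^2}\bigr)\bigl(|k_1|\lambda+|k_2|\sqrt{1-\lambda^2}\bigr)$, and observes that putting $r(\lambda)-|k_2|/(|k_1|+|k_2|)$ over a common denominator exhibits it as precisely the small linear factor divided by $(|k_1|+|k_2|)(\lambda+\sqrt{1-\lambda^2})$. The Diophantine inequality with $q=|k_1|+|k_2|$ then bounds the small factor below by $C(|k_1|+|k_2|)^{-(1+\beta)}$, while the large factor is bounded below by $\lambda+\sqrt{1-\lambda^2}\ge 1$ with no arithmetic input at all; norm equivalence between $|k_1|+|k_2|$ and $\sqrt{k_1^2+k_2^2}$ finishes the job. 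If you want to salvage your route, replace the step ``denominator $=k_1^2+k_2^2$'' with the correct preimage rational and carry the mean value theorem estimate through explicitly; otherwise the factorization argument is the shorter path.
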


\begin{proof}

First, we write
\begin{center}
$\displaystyle \left\lvert \dfrac{1}{-k_2^2+k_1^2\lambda^2+k_2^2\lambda^2} \right\rvert =\displaystyle \left\lvert \dfrac{1}{k_1^2\lambda^2-k_2^2(1-\lambda)^2} \right\rvert = \displaystyle \left\lvert\dfrac{1}{\lvert k_1\rvert\lambda - \lvert k_2\rvert\sqrt{1-\lambda^2}} \right\rvert \cdot \displaystyle \left\lvert\dfrac{1}{\lvert k_1\rvert\lambda +\lvert k_2\rvert\sqrt{1-\lambda^2}}\right\rvert$. 
\end{center}

We substitute $r(\lambda)=\dfrac{\lambda}{\sqrt{1-\lambda^2}+\lambda}$ and $p/q= k_2/(k_1+k_2)$ into Equation (\ref{eq:diophantineinequality}), the equation defining $\beta-$Diophantine irrationals, and we have
\begin{equation} \label{eq:6}
    \displaystyle \left\lvert \dfrac{\lambda}{\sqrt{1-\lambda^2}+\lambda} - \dfrac{\lvert k_2\rvert}{\lvert k_1\rvert+\lvert k_2\rvert} \right\rvert > C\left\lvert\dfrac{1}{\lvert k_1\rvert+\lvert k_2\rvert}\right\rvert^{2+\beta} = C \left (\dfrac{1}{\lvert k_1\rvert+\lvert k_2\rvert} \right )^{2+\beta},
\end{equation}
where $C$ is independent of $k_1$ and $k_2$. Note that $C$ may vary throughout the proof, but always remains independent of $k_1$ and $k_2$. 

Putting the fractions under a common denominator, we can write
\begin{align*}
\displaystyle \left\lvert \dfrac{\lambda}{\sqrt{1-\lambda^2}+\lambda} - \dfrac{\lvert k_2\rvert}{\lvert k_1\rvert+\lvert k_2\rvert} \right\rvert &= \displaystyle \left\lvert \dfrac{\lvert k_1\rvert\lambda + \lvert k_2\rvert\lambda - \lvert k_2\rvert\sqrt{1-\lambda^2}-\lvert k_2\rvert\lambda}{(\lvert k_1\rvert+\lvert k_2\rvert)(\sqrt{1-\lambda^2}+\lambda)} \right\rvert \\ &= \displaystyle \left\lvert \dfrac{\lvert k_1\rvert\lambda - \lvert k_2\rvert\sqrt{1-\lambda^2}}{(\lvert k_1\rvert+\lvert k_2\rvert)(\sqrt{1-\lambda^2}+\lambda)} \right\rvert > C\left(\dfrac{1}{\lvert k_1\rvert+\lvert k_2\rvert}\right)^{2+\beta}.
\end{align*}
Simplifying, we have
\begin{center}
    $\displaystyle \left\lvert \dfrac{\lvert k_1\rvert\lambda - \lvert k_2\rvert\sqrt{1-\lambda^2}}{\sqrt{1-\lambda^2}+\lambda} \right\rvert > C\left(\dfrac{1}{\lvert k_1\rvert+\lvert k_2\rvert}\right)^{1+\beta}$. 
\end{center}
Therefore,
\begin{center}
    $\displaystyle \left\lvert \dfrac{\sqrt{1-\lambda^2}+\lambda}{\lvert k_1\rvert\lambda - \lvert k_2\rvert\sqrt{1-\lambda^2}} \right\rvert < \dfrac{1}{C}(\lvert  k_1\rvert+\lvert k_2\rvert) ^{1+\beta}$.
\end{center}
Furthermore, because $0 \le \lambda \le 1$, and $(\lambda + \sqrt{1-\lambda^2})^2 = \lambda^2 + 1-\lambda^2 + 2\lambda\sqrt{1-\lambda^2} = 1+2\lambda\sqrt{1-\lambda^2} \ge 1$, then we know that $\lambda + \sqrt{1-\lambda^2} \ge 1$. 
Hence, we conclude that 
\begin{center}
    $\displaystyle \left\lvert \dfrac{1}{\lvert k_1\rvert\lambda -\lvert k_2\rvert\sqrt{1-\lambda^2}} \right\rvert < \dfrac{1}{C}( \lvert k_1\rvert+\lvert k_2\rvert)^{1+\beta}$.
\end{center}

Next, because $\lvert k_1 \rvert$ and $\lvert k_2 \rvert$ are integers $\ge 1$, we know
\begin{center}
    $\lvert k_1\rvert\lambda + \lvert k_2\rvert\sqrt{1-\lambda^2} \ge \lambda + \sqrt{1-\lambda^2} \ge 1$,
\end{center}
and
\begin{center}
    $\left \vert \dfrac{1}{\lvert k_1\rvert\lambda +\lvert k_2\rvert\sqrt{1-\lambda^2}} \right \rvert \le 1$.
\end{center}
We now have
\begin{center}
$\displaystyle \left\lvert \dfrac{1}{\lvert k_1\rvert\lambda -\lvert k_2\rvert\sqrt{1-\lambda^2}} \right\rvert \cdot \displaystyle \left\lvert \dfrac{1}{\lvert k_1\rvert\lambda +\lvert k_2\rvert\sqrt{1-\lambda^2}} \right\rvert < \dfrac{1}{C}( \lvert k_1\rvert+\lvert k_2\rvert) ^{1+\beta} \cdot 1 = \dfrac{1}{C}( \lvert k_1\rvert+\lvert k_2\rvert) ^{1+\beta}$,
\end{center}
and therefore,
\begin{center}
    $\displaystyle \left\lvert \dfrac{1}{-k_2^2+k_1^2\lambda^2+k_2^2\lambda^2} \right\rvert < \dfrac{1}{C}(\lvert k_1\rvert+\lvert k_2\rvert) ^{1+\beta}$.
\end{center}

We know that $( \lvert k_1 \rvert + \lvert k_2 \rvert )$ is equivalent to the $L^1$-norm, and that $\sqrt{k_1^2+k_2^2}$ is the $L^2$ norm, so we have a $C > 0$ such that
\begin{center}
    $\dfrac{1}{C}( \lvert k_1 \rvert + \lvert k_2 \rvert ) \le \sqrt{k_1^2+k_2^2} \le C ( \lvert k_1 \rvert + \lvert k_2 \rvert )$.
\end{center}

Hence, we have the inequality
\begin{center}
    $\displaystyle \left\lvert \dfrac{1}{-k_2^2+k_1^2\lambda^2+k_2^2\lambda^2} \right\rvert < \dfrac{1}{C}(\lvert k_1\rvert+\lvert k_2\rvert) ^{1+\beta} \le C\left (\sqrt{k_1^2+k_2^2}\right )^{1+\beta} < C\left(\sqrt{1+k_1^2+k_2^2}\right)^{1+\beta}$,
\end{center}
as desired.
\end{proof}

We use this to complete the proof of Proposition \ref{prop:bigboundingprop}.

\begin{proof}[Proof of Proposition \ref{prop:bigboundingprop}]
From Equation (\ref{eq:fourierexplicit}), we know 
\begin{equation*}
    \hat{u}(t,k_1,k_2) = \dfrac{\hat{f}(x) \cos \lambda t}{4 \pi^2 (-k_2^2 + k_1^2 \lambda^2 + k_2^2 \lambda^2)},
\end{equation*}
and we can write
\begin{align*}
    \left \vert \dfrac{\hat{f}(x) \cos \lambda t}{4 \pi^2 (-k_2^2 + k_1^2 \lambda^2 + k_2^2 \lambda^2)} \right \rvert &\le \left \vert \dfrac{\hat{f}(x)}{4 \pi^2 (-k_2^2 + k_1^2 \lambda^2 + k_2^2 \lambda^2)} \right \rvert \\ &= \left \vert \hat{f}(x) \cdot \dfrac{1}{4\pi^2} \cdot \dfrac{1}{-k_2^2 + k_1^2 \lambda^2 + k_2^2 \lambda^2} \right \rvert \\ &< \lvert \hat{f}(x) \rvert \cdot C\left(\sqrt{1+k_1^2+k_2^2}\right)^{1+\beta}.
\end{align*}
\end{proof}

\subsection{Differentiability of $u$} \label{regularity}

Now we use these bounds on $\hat{u}$ and substitute them into the Sobolev space definitions (Definition \ref{def:sobolev}). 

\begin{lemma}\label{lemma:ftouh}
If $f \in H^s$, then $u \in H^{s-1-\beta}$.
\end{lemma}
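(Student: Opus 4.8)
The plan is to unwind Definition~\ref{def:sobolev} applied to the solution $u$ and substitute the coefficient estimate of Proposition~\ref{prop:bigboundingprop}. Concretely, it suffices to show that for each $t$ the quantity
$$\sum_{k_1,k_2}(1+k_1^2+k_2^2)^{\,s-1-\beta}\,\lvert\hat u(t,k_1,k_2)\rvert^2$$
is finite, since by Definition~\ref{def:sobolev} this is exactly the assertion $u(t)\in H^{s-1-\beta}$.

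First I would treat the main range $k_1,k_2\in\mathbb{Z}_{\neq 0}$. There Proposition~\ref{prop:bigboundingprop}, together with $\lvert\cos\lambda t\rvert\le 1$, gives $\lvert\hat u(t,k_1,k_2)\rvert^2\le C^2\,\lvert\hat f(k_1,k_2)\rvert^2\,(1+k_1^2+k_2^2)^{1+\beta}$. Multiplying by the Sobolev weight $(1+k_1^2+k_2^2)^{s-1-\beta}$, the powers collapse and each term is bounded by $C^2(1+k_1^2+k_2^2)^{s}\lvert\hat f(k_1,k_2)\rvert^2$; summing over this range and invoking $f\in H^s$ bounds the contribution by $C^2\sum(1+k_1^2+k_2^2)^{s}\lvert\hat f(k_1,k_2)\rvert^2<\infty$. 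The one substantive point is that the $1+\beta$ orders of differentiation lost in Lemma~\ref{lemma:biglemma} (hence in Proposition~\ref{prop:bigboundingprop}) are exactly what is subtracted in the exponent $s-1-\beta$, so this cancellation is tight.

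Next I would dispose of the degenerate indices with $k_1k_2=0$, which are not covered by Lemma~\ref{lemma:biglemma}. Here the denominator in (\ref{eq:fourierexplicit}) does not become small: for $k_1=0$, $k_2\neq 0$ it equals $k_2^2(\lambda^2-1)$, of modulus $\ge 1-\lambda^2$, and for $k_2=0$, $k_1\neq 0$ it equals $k_1^2\lambda^2\ge\lambda^2$. In both cases $\lvert\hat u(t,k)\rvert\le C_\lambda\,\lvert\hat f(k)\rvert$ with $C_\lambda$ independent of $k$, so this part of the weighted sum is at most $C_\lambda^2\sum(1+k_1^2+k_2^2)^{s-1-\beta}\lvert\hat f(k)\rvert^2$, which is finite because $s-1-\beta\le s$ and $f\in H^s$. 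The single zero mode $k_1=k_2=0$ makes (\ref{eq:3}) degenerate to $0=\hat f(0,0)\cos\lambda t$; under the natural compatibility $\hat f(0,0)=0$ it contributes nothing, and in any event one Fourier mode never affects Sobolev membership.

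I expect the main obstacle to be organizational rather than deep: ensuring the estimate is uniform in $t$ (immediate from $\lvert\cos\lambda t\rvert\le 1$) and handling the axis terms excluded by Lemma~\ref{lemma:biglemma} via the elementary lower bounds just noted. Once these pieces are assembled the lemma is essentially a one-line consequence of Proposition~\ref{prop:bigboundingprop} and the definition of the Sobolev norm; combined with the embedding $C^s\hookrightarrow H^s$ on the square and Proposition~\ref{prop:htoc}, it then yields Theorem~\ref{theorem:bigtheorem}.
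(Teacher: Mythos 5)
Your proposal is correct and follows essentially the same route as the paper: unwind Definition~\ref{def:sobolev} for $u$, insert the bound from Proposition~\ref{prop:bigboundingprop}, and observe that the weight $(1+k_1^2+k_2^2)^{s-1-\beta}$ exactly absorbs the $(1+\beta)$ loss so the sum is controlled by the $H^s$ norm of $f$. The only difference is that you explicitly treat the degenerate indices $k_1k_2=0$ and the zero mode (which Lemma~\ref{lemma:biglemma} excludes and the paper's proof silently skips), which is a worthwhile refinement rather than a different argument.
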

\begin{proof}
Because $f \in H^s$, 
\begin{equation*}
    \displaystyle \sum_{k_1,k_2=-\infty}^\infty (1+k_1^2+k_2^2)^{2s} \lvert \hat{f}(k_1,k_2) \rvert^2 < \infty. 
\end{equation*}
Using Proposition \ref{prop:bigboundingprop}, we have

$\displaystyle \sum_{k_1,k_2=-\infty}^\infty (1+k_1^2+k_2^2)^{2(s-1-\beta)} \lvert \hat{u}(k_1,k_2) \rvert^2$

\hspace{3cm} $< \displaystyle \sum_{k_1,k_2=-\infty}^\infty (1+k_1^2+k_2^2)^2({s-1-\beta}) \cdot \lvert (1+k_1^2+k_2^2)^{2(1+\beta)} \cdot \hat{f}(k_1,k_2) \rvert^2$

\hspace{3cm} $= \displaystyle \sum_{k_1,k_2=-\infty}^\infty (1+k_1^2+k_2^2)^{2s} \lvert \hat{f}(k_1,k_2) \rvert^2  < \infty$.

Hence, we know
\begin{equation*}
    \displaystyle \sum_{k_1,k_2=-\infty}^\infty (1+k_1^2+k_2^2)^{2(s-1-\beta)} \lvert \hat{u}(k_1,k_2) \rvert^2 < \infty,
\end{equation*}
meaning that $u \in H^{s-1-\beta}$. 

\end{proof}

Now we prove Theorem \ref{theorem:bigtheorem}, our main result. 
\begin{proof}[Proof of Theorem \ref{theorem:bigtheorem}]
By Lemma \ref{lemma:ftouh}, we have $u \in H^{s-1-\beta}$ when $f \in H^s$. Hence, by Proposition \ref{prop:htoc}, we know that $u \in C^{s-2-\beta}$ and $f \in C^{s-1}$; or, equivalently, when $f \in C^s$, then $u \in C^{s-1-\beta}$, as desired.
\end{proof}

This also allows us to show that $u$ is smooth given $f$ is smooth. 

\begin{corollary} \label{corollary:smoothcorollary}
If $f$ is smooth (i.e. $f \in C^\infty$), then $u$ is also smooth. 
\end{corollary}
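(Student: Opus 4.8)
The plan is to derive the corollary directly from Theorem~\ref{theorem:bigtheorem} by letting the regularity index $s$ tend to infinity. Since $f$ smooth means $f \in C^s$ for every positive integer $s$, I would first fix an arbitrary $\beta$ and $C > 0$ witnessing that $r(\lambda)$ is $\beta$-Diophantine (such a pair exists by the hypothesis that the rotation is sufficiently irrational, and note $\beta$ does not depend on $s$). Then for each $s$, Theorem~\ref{theorem:bigtheorem} gives $u \in C^{s-1-\beta}$.

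The key step is the observation that $s - 1 - \beta \to \infty$ as $s \to \infty$, so $u$ lies in $C^q$ for every $q$, hence $u \in C^\infty$. Concretely, given any target differentiability order $q \in \mathbb{N}$, I would choose $s$ large enough that $s - 1 - \beta \geq q$ (e.g. $s = \lceil q + 1 + \beta \rceil$), apply the theorem, and use that $C^{s-1-\beta} \subseteq C^q$. Since $q$ was arbitrary, $u$ is infinitely differentiable.

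One subtlety worth a sentence in the write-up: the constant $\beta$ is fixed by $r(\lambda)$ alone and is unaffected by increasing $s$, so the ``loss'' of $1 + \beta$ derivatives is a fixed finite quantity that becomes negligible as $s$ grows; this is exactly why the argument closes. I do not anticipate a genuine obstacle here — the corollary is a formal consequence of the theorem — so the only care needed is to state the quantifier order correctly (fix $\beta$ first, then let $s \to \infty$) rather than to carry out any new estimate.

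\begin{proof}[Proof of Corollary~\ref{corollary:smoothcorollary}]
Suppose $f$ is smooth. Since $r(\lambda)$ is $\beta$-Diophantine, fix constants $\beta, C > 0$ as in Definition~\ref{def:diophantine}; these depend only on $r(\lambda)$. Let $q \in \mathbb{N}$ be arbitrary, and choose an integer $s$ with $s \geq q + 1 + \beta$. Because $f \in C^\infty$, in particular $f \in C^s\big([0,1]\times[0,1]\big)$, so Theorem~\ref{theorem:bigtheorem} gives $u \in C^{s-1-\beta}$. Since $s - 1 - \beta \geq q$, we have $u \in C^q$. As $q$ was arbitrary, $u \in C^q$ for all $q \in \mathbb{N}$, i.e. $u \in C^\infty$, so $u$ is smooth.
\end{proof}
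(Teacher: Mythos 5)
Your proposal is correct and follows essentially the same route as the paper: both arguments exploit that the loss of $1+\beta$ derivatives is a fixed finite quantity independent of $s$, so letting $s\to\infty$ yields $u\in C^q$ for every $q$. The only cosmetic difference is that you invoke Theorem~\ref{theorem:bigtheorem} directly while the paper re-runs the argument through the Sobolev spaces ($f\in H^s$ for all $s$ implies $u\in H^{s-1-\beta}$ for all $s$); your version is, if anything, slightly more careful about the quantifier order.
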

\begin{proof}
If $f$ is smooth, then $f \in H^s$ for all $s$. Then, $u \in H^{s-1-\beta}$ for all $s$, which implies that $u \in C^{s-2-\beta}$ for all $s$; so, $u$ is also smooth. 
\end{proof}
\vspace{-0.1cm}
In this section, we directly related the regularity of $f$ to the regularity of $u$; more precisely, that when $f \in C^{s}$, then $u \in C^{s-1-\beta}$. We used Corollary \ref{corollary:smoothcorollary} to extend this to smoothness in general, concluding our main result regarding the regularity of solutions to the Poincaré problem in the square for sufficiently irrational rotation. 
\vspace{-2cm}
\section{Numerical Simulations on \textit{r} and Future Directions} \label{section4}

The natural extension of these results is to examine the behavior of $u$ and irrational rotation in shapes beyond the square. We conjecture that we can extend the smoothness result to other shapes.

\begin{conjecture}
Given a smooth forcing function $f(x)$ and a sufficiently irrational rotation number $r(\lambda)$, the solution $u(t)$ of the Poincaré problem, acting on any convex $\lambda$-simple domain (see Dyatlov et al. \cite{DWZ}, Definition 1), is smooth.
\end{conjecture}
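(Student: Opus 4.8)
The plan is to reduce the time-dependent Poincar\'e problem to a stationary spectral problem and then feed in the dynamical fact that, for a Diophantine rotation number, the chess billiard map on $\partial\Omega$ is smoothly conjugate to a rigid rotation. First I would pass to the stationary picture: a standard Duhamel-and-spectral-theorem computation --- carried out explicitly in the Fourier basis in Subsection \ref{solvefourier} for the square --- writes $u(t)$ in terms of $\cos\lambda t$ and the limiting-absorption resolvent $(L_\lambda \mp i0)^{-1} f$ of the stationary operator $L_\lambda = \lambda^2\Delta - \partial_{x_2}^2$ with Dirichlet conditions on $\partial\Omega$. A symbol computation shows the null bicharacteristics of $L_\lambda$ are straight lines of slope $\pm\rho = \pm\sqrt{1-\lambda^2}/\lambda$, which reflect off $\partial\Omega$ exactly by the chess billiard map $b(\cdot,\lambda)$ --- the reduction used by Colin de Verdi\`ere--Saint-Raymond and Dyatlov et al.\ \cite{DWZ}. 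Smoothness of $u(t)$ for every $t$ is therefore equivalent to smoothness of $(L_\lambda \mp i0)^{-1} f$.

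Next I would run the microlocal propagation-of-singularities argument for $L_\lambda$. Away from its characteristic variety $L_\lambda$ is elliptic, so the wavefront set $\operatorname{WF}\big((L_\lambda \mp i0)^{-1}f\big)$ lies on that variety, and along it the wavefront set is carried around $\partial\Omega$ by iterating $b(\cdot,\lambda)$. The crucial dynamical input is that $b(\cdot,\lambda)$ is an orientation-preserving circle diffeomorphism, smooth except at the finitely many exceptional boundary points produced by $\lambda$-simplicity, and that its rotation number is $r(\lambda)$; when $r(\lambda)$ is Diophantine, the Herman--Yoccoz linearization theorem provides a $C^\infty$ diffeomorphism $h$ of $\partial\Omega$ with $h\circ b = R_{r(\lambda)}\circ h$, where $R_{r(\lambda)}$ is the rigid rotation. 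In the square, $h$ is precisely the explicit unfolding map underlying Proposition \ref{prop:rotexplicit}, which is exactly why the Fourier computation of Section \ref{section3} closes.

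With the smooth conjugacy in hand I would transfer the regularity question to the rotation model. The boundary data of $(L_\lambda \mp i0)^{-1}f$ satisfies a cohomological (transfer) equation of the form $v - m\,(v\circ b) = g$, with multiplier $m$ and source $g$ built from $f$ and the geometry of $\Omega$; pulling back by $h$ turns this into a constant-coefficient equation $\tilde v - c\,(\tilde v\circ R_{r(\lambda)}) = \tilde g$ on the circle, solved term by term in the Fourier basis with small divisors $|1 - c\,e^{2\pi i n r(\lambda)}|^{-1}$. Since this quantity is comparable to the distance from $n\,r(\lambda)$ to the integers, the Diophantine inequality (\ref{eq:diophantineinequality}) bounds it by $C|n|^{1+\beta}$, exactly mirroring Lemma \ref{lemma:biglemma}; hence $\tilde v$, and therefore $v$, loses only $1+\beta$ derivatives relative to $g$, and smooth $f$ yields a smooth boundary trace. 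Finally I would propagate this boundary regularity into the interior by integrating along the (smooth, non-degenerate) bicharacteristics and invoking interior elliptic regularity off the characteristic set, concluding $u(t)\in C^\infty(\Omega)$ for all $t$.

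The main obstacle is the dynamical step on a general convex $\lambda$-simple domain: one must verify that $b(\cdot,\lambda)$ is a genuine $C^\infty$ circle diffeomorphism, control its behaviour near the finitely many non-smooth boundary points, and confirm that the Diophantine exponent $\beta$ attached to $r(\lambda)$ falls in the range for which the conjugacy $h$ is smooth with quantitative Sobolev bounds --- the sharp hypotheses of Herman--Yoccoz. A secondary difficulty is the boundary-to-interior passage: unlike the square there is no global Fourier basis adapted to $\Omega$, so one needs a microlocal parametrix near $\partial\Omega$ (or a careful trace and extension argument) to convert the gain on the boundary trace into a gain on $u$ itself. If $b(\cdot,\lambda)$ turns out to be only finitely smooth at the exceptional points, the conjugacy is correspondingly only finitely smooth, and one would then obtain --- instead of $C^\infty$ --- a quantitative loss statement $f\in C^s \Rightarrow u \in C^{s - c(\beta)}$ in the spirit of Theorem \ref{theorem:bigtheorem}.
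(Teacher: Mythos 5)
The statement you are addressing is stated in the paper only as a conjecture: the paper offers no proof of it, proving instead the special case of the square (Theorem \ref{theorem:bigtheorem} and its corollary) and supporting the general case with numerical simulations of $r(\lambda)$. So there is no argument of the paper to compare yours against; what you have written is a research program, not a proof, and it should be judged as such.

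As a program it is the natural one, and it correctly identifies the machinery in the literature the paper cites (limiting absorption for the stationary operator $L_\lambda=\lambda^2\Delta-\partial_{x_2}^2$, propagation along characteristics reflecting by $b(\cdot,\lambda)$, and linearization of the boundary dynamics). But the two obstacles you flag at the end are not technical afterthoughts --- they are exactly the content of the conjecture, and your sketch does not resolve either. First, for a general convex $\lambda$-simple domain the map $b(\cdot,\lambda)$ is built from the two involutions that swap endpoints of characteristic chords, and these degenerate at the boundary points where the characteristic directions are tangent to $\partial\Omega$ (and at corners, if the domain is a polygon as in the paper's simulations); without establishing that $b(\cdot,\lambda)$ is genuinely $C^\infty$ there, Herman--Yoccoz gives at best a finitely smooth conjugacy $h$, and the loss of derivatives in your transfer-equation step is no longer controlled by $1+\beta$ alone. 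Second, the step ``smoothness of the boundary trace implies smoothness of $u$ in the interior'' is asserted by integrating along bicharacteristics, but for a zeroth-order-type problem of this kind the interior regularity near the characteristic set is precisely where singularities can live, and a parametrix construction near the tangency points is needed; nothing in your outline supplies it. In the square both difficulties vanish because the unfolding of Proposition \ref{prop:rotexplicit} gives a global exact conjugacy and a global Fourier basis, which is why the paper's argument (Subsection \ref{solvefourier}, Lemma \ref{lemma:biglemma}) closes there and only there. A complete proof of the conjecture would need to supply these two missing analytic inputs; as written, the proposal restates the conjecture's difficulty rather than overcoming it.
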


Studying the chess billiard flow is more difficult in other shapes where rotation numbers cannot be explicitly calculated. To study them, we can use numerical simulations of $r$. We calculate the value of $r(\lambda)$ by iteratively intersecting the boundary of the shape with lines of slope $\pm\dfrac{\sqrt{1-\lambda^2}}{\lambda}$, to map the points $b^k(p)$. To estimate $r(\lambda)$ and distance traveled per rotation, we parameterize using angles. These calculations were performed on Wolfram Mathematica, Version 12.1.1.0. 

Figure \ref{fig:square_simulated_r} shows the numerical simulations plot of $r(\lambda)$ vs. $\lambda$ for a square, which we see aligns well with the plot of $\lambda$ vs.  $\dfrac{\lambda}{\sqrt{1-\lambda^2}+\lambda}$, shown in Figure \ref{fig:square_real_r}. These plots are smooth, which is not necessarily true for other geometries, as shown in the lower subplots of Figure \ref{fig:shapes_rot}. 

\begin{figure}[H]
    \centering
    \begin{subfigure}[b]{0.42\textwidth}
        \centering
         \includegraphics[width=\textwidth]{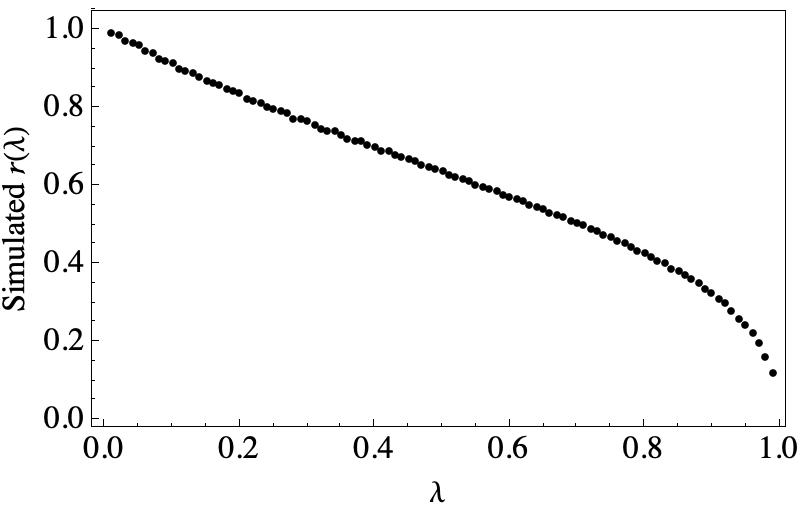}
         \caption{Simulated $r(\lambda)$}
         \label{fig:square_simulated_r}
    \end{subfigure}
    \begin{subfigure}[b]{0.42\textwidth}
        \centering
         \includegraphics[width=\textwidth]{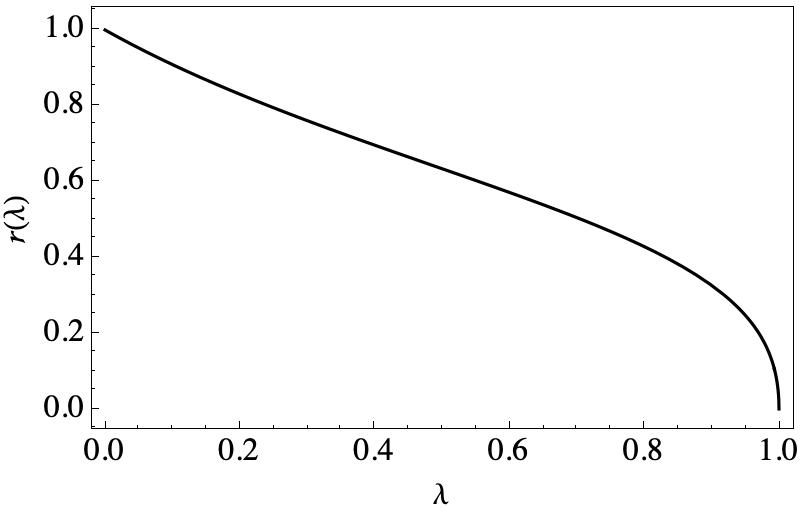}
         \caption{$r(\lambda) = \frac{\lambda}{\sqrt{1-\lambda^2}+\lambda}$}
         \label{fig:square_real_r}
    \end{subfigure}
    \caption{Simulated and expected plots of $r(\lambda)$ vs. $\lambda$ in a square.}
    \label{fig:square_rot}
\end{figure}

For the rotation number plots in Figure \ref{fig:shapes_rot}, we sampled $r(\lambda)$ for 10,000 equally-spaced values of $\lambda$. Here, we see there are plateaus, or flat segments, that form. Even with small adjustments from the square, such as a small angle of perturbation for the tilted square, plateaus form and the plots are no longer smooth. 

\begin{figure}[H]
    \centering
    \begin{subfigure}[b]{0.32\textwidth}
        \centering
         \includegraphics[width=\textwidth]{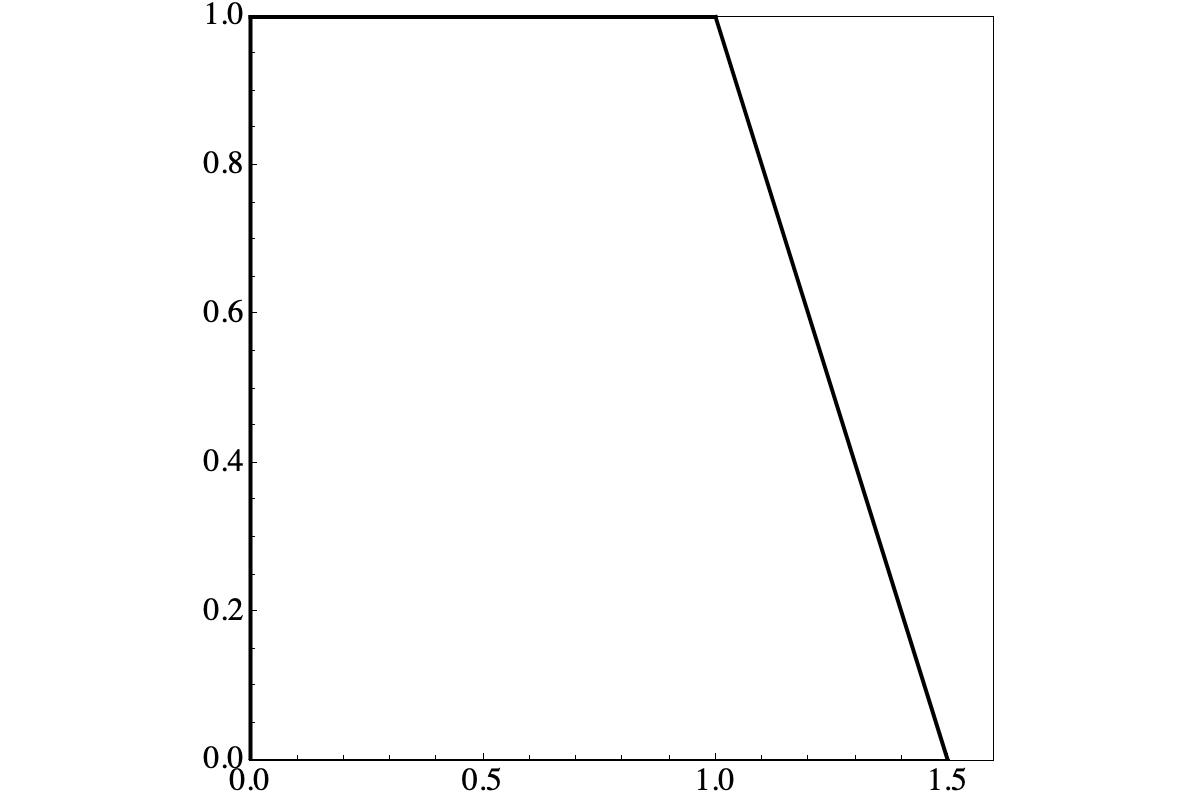}
    \end{subfigure}
    \hfill
    \begin{subfigure}[b]{0.32\textwidth}
        \centering
         \includegraphics[width=\textwidth]{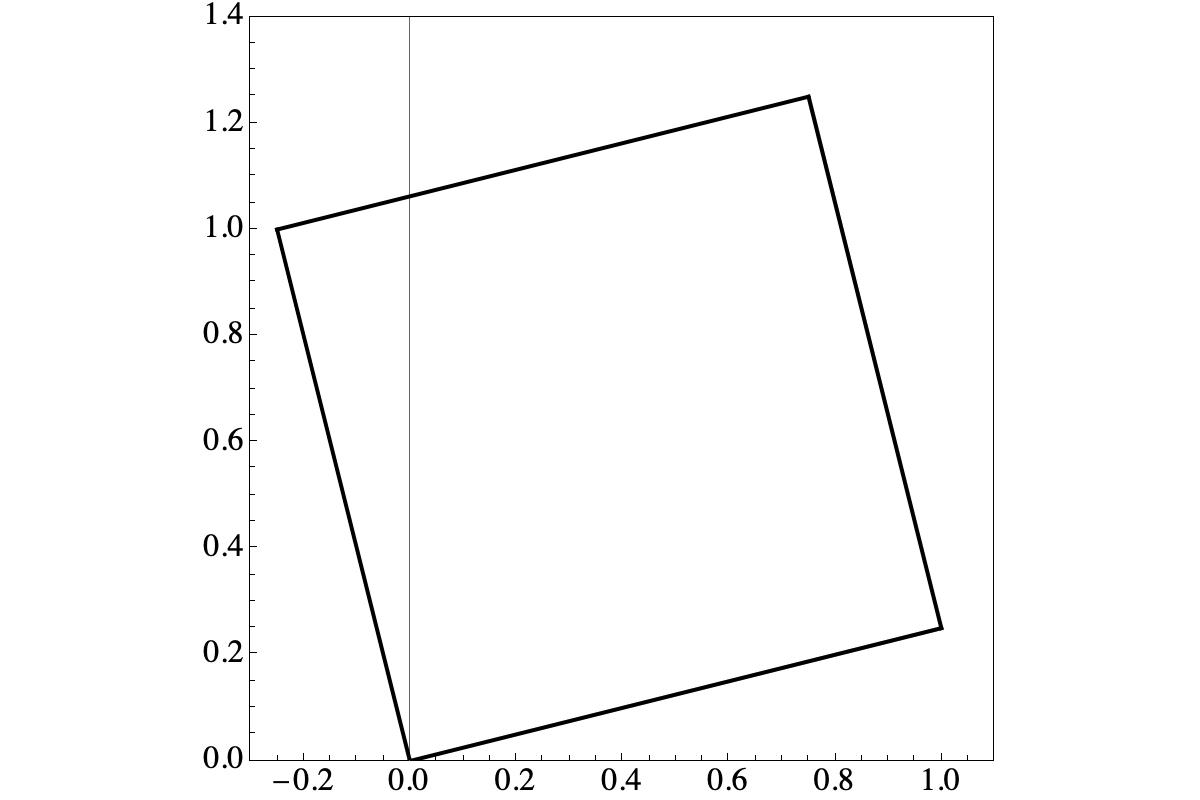}
    \end{subfigure}
    \hfill
    \begin{subfigure}[b]{0.32\textwidth}
        \centering
         \includegraphics[width=\textwidth]{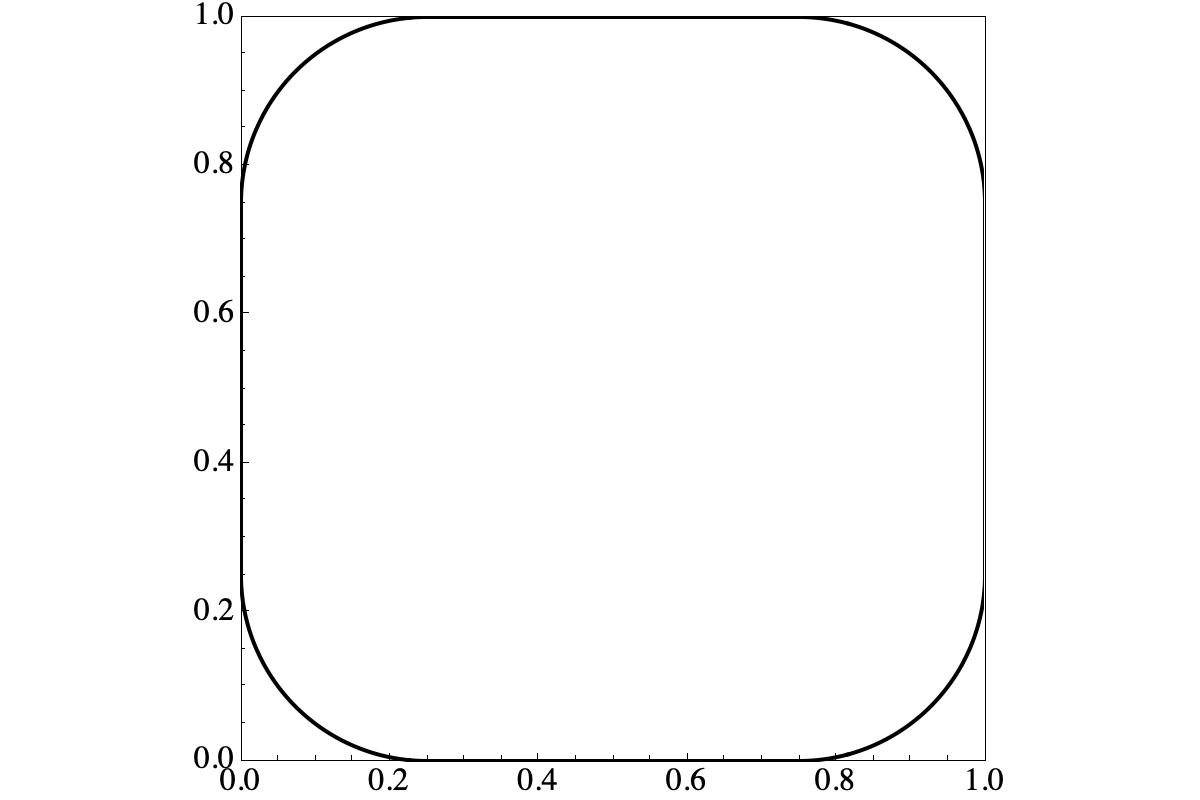}
    \end{subfigure}
    \hfill
    \vspace{0.5cm}
    
    \begin{subfigure}[b]{0.32\textwidth}
        \centering
         \includegraphics[width=\textwidth]{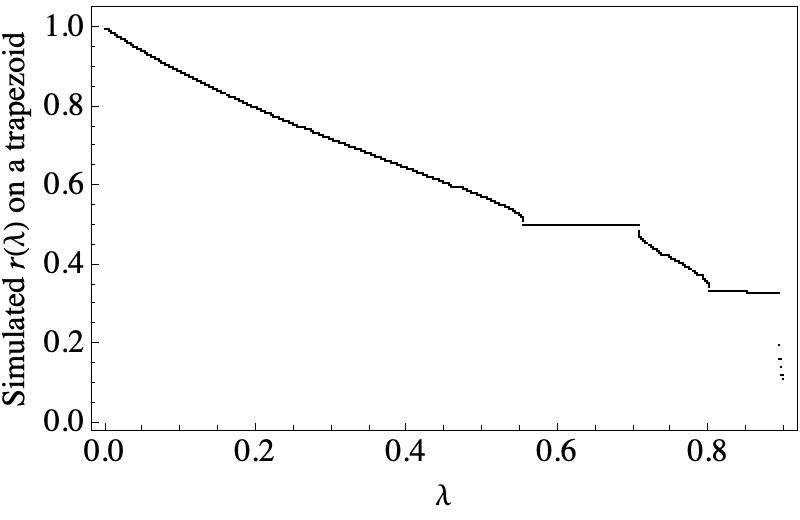}
         \caption{$r(\lambda)$ plot for a trapezoid}
    \end{subfigure}
    \hfill
    \begin{subfigure}[b]{0.32\textwidth}
        \centering
         \includegraphics[width=\textwidth]{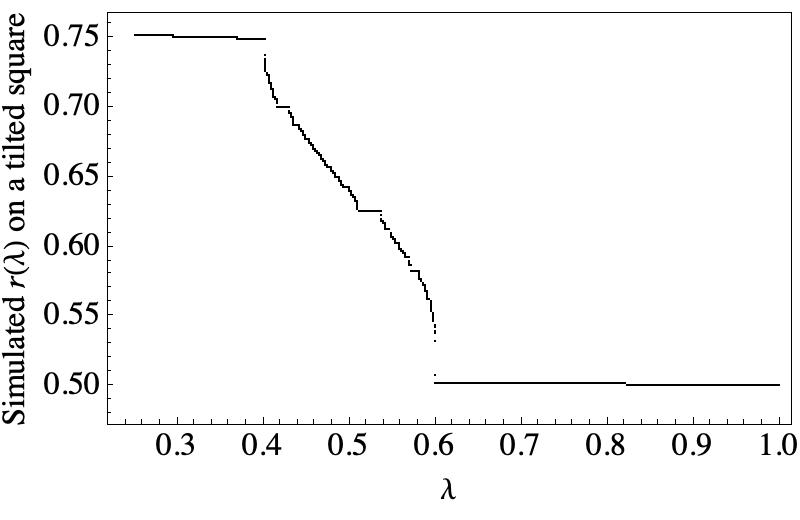}
         \caption{$r(\lambda)$ plot for a tilted square}
    \end{subfigure}
    \hfill
    \begin{subfigure}[b]{0.32\textwidth}
        \centering
         \includegraphics[width=\textwidth]{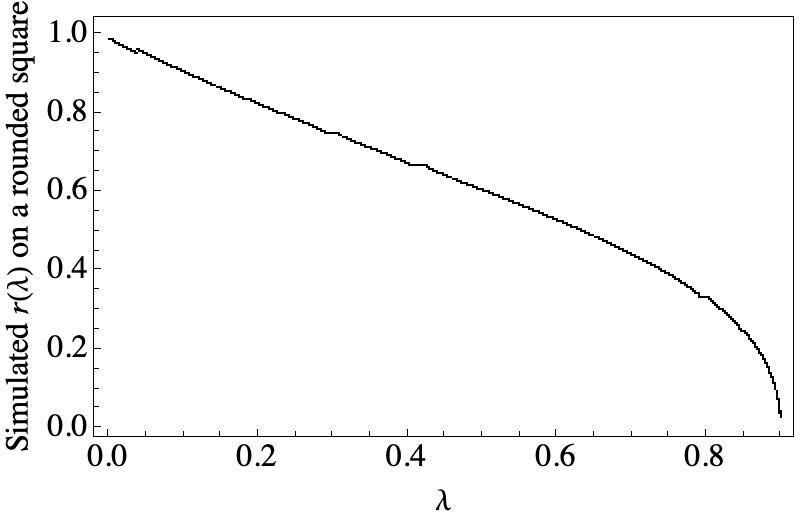}
         \caption{$r(\lambda)$ plot for a rounded square}
    \end{subfigure}
    
    \caption{Numerical simulations for $r(\lambda)$ on various shapes, with visible plateaus. }
    \label{fig:shapes_rot}
\end{figure}

One way to examine the formation of plateaus is through fractal similarity and dimension for $r(\lambda)$, described in the next subsection. 

\subsection{Devil's Staircase Dimensional Analysis}

The Devil's Staircase dimension is a measure of fractal dimension for plots with plateaus, and can be used to study dynamical systems. Jensen et al. \cite{jensen} describe this value as calculated by approximating the Minkowski dimension on the set where there are no plateaus. The Minkowski dimension estimates a curve's fractal similarity by counting the number of $\epsilon$ size tiles needed to cover points on the curve completely. More precisely, we define $S$ as the fraction of points on a plateau, and $1-S$ as the fraction of points between plateaus, and $q(\epsilon)$ as the number of tiles of size $\epsilon$ encountered by the points between plateaus.

The length $N(\epsilon)$ covered by the tiles is
\begin{equation*}
    N(\epsilon) = \dfrac{1-S}{q(\epsilon)},
\end{equation*}
and the Devil's Staircase dimension is 
\begin{equation*}
    D(\epsilon) = \dfrac{\log{N(\epsilon)}}{\log{1/q(\epsilon)}}.
\end{equation*}

We see that when a plot has no plateaus, then there are no gaps in the set and $N=1/q$, and the dimension is 1. For plots with increasingly large and prevalent plateaus, we expect the dimension to decrease to 0. 

We analyze the dimension of the tilted square, for varying angles of tilt. We know that when the angle of tilt is 0, then it is a normal square, and the dimension should be 1. The plot in Figure \ref{fig:dimensionplot} shows the change in dimension as the angle of tilt increases. We fit a polynomial curve to this plot, and we find that the curve can be fit well to a quadratic. 
\begin{figure}[H]
    \centering
    \begin{subfigure}[b]{0.42\textwidth}
        \centering
         \includegraphics[width=\textwidth]{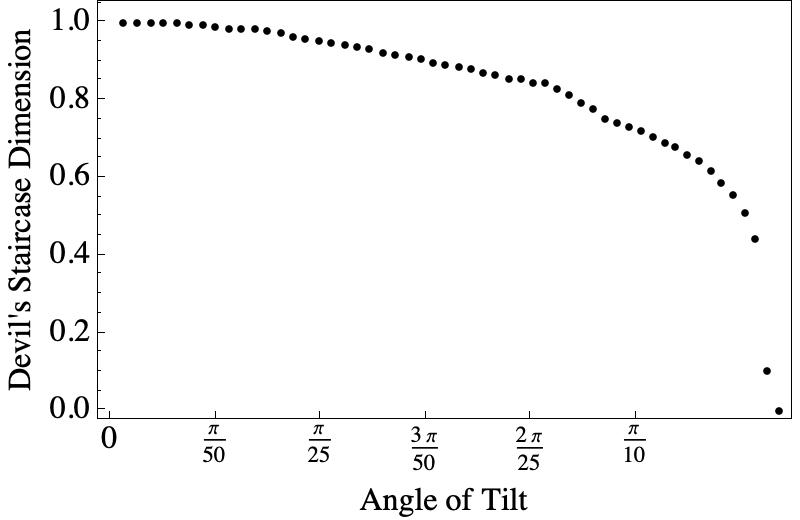}
    \end{subfigure}
    \begin{subfigure}[b]{0.42\textwidth}
        \centering
         \includegraphics[width=\textwidth]{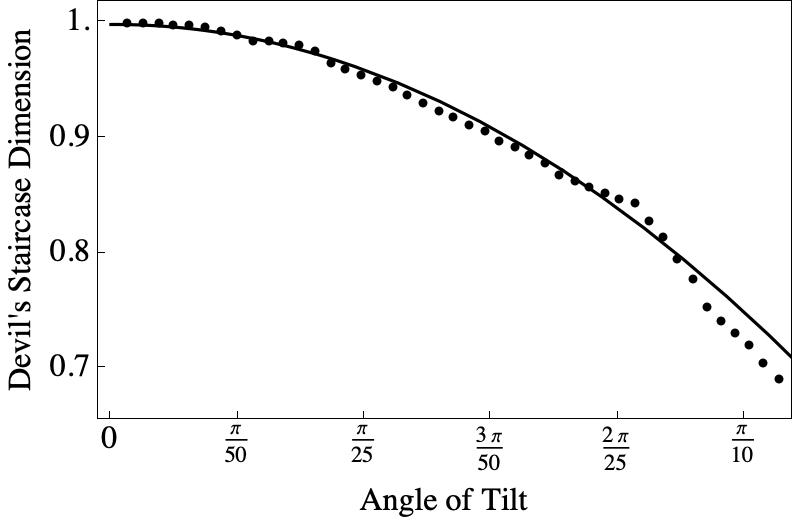}
    \end{subfigure}
    \caption{Devil's Staircase dimension for $r(\lambda)$ at varying angles of tilt on a tilted square.}
    \label{fig:dimensionplot}
\end{figure}

\section{Conclusion}

We showed that the solution $u$ to the Poincaré problem is smooth in the square for sufficiently irrational rotation, extending past results regarding rational rotation. We explicitly found the rotation number of the chess billiard map in the square, and used its irrationality condition to bound the Fourier coefficients of $u$, ultimately concluding that $u$ retains almost all of the regularity of $f$, and that high regularity and smoothness of $f$ correspond to high regularity and smoothness of $u$. 

Further study in this topic would include extending this result to more or all geometries; specifically, analyzing rapid decay of Fourier coefficients in irrational rotation for shapes beyond a square. We expect that in other boundaries with sufficiently irrational rotation, the Poincaré problem will also result in smooth solutions. For most other geometries, $r(\lambda)$ is difficult to formulate explicitly, and numerical simulations would play a useful role in future work in this direction.

\section{Acknowledgments} 

Thank you to my mentor Zhenhao Li at the Massachusetts Institute of Technology (MIT) Department of Mathematics for suggesting this project and his constant support and guidance through this mentorship. Thank you to the head mentor, Dr. Tanya Khovanova from the math department of MIT for her advice and discussions during our meetings. I would also like to thank my tutor Dr. John Rickert of the Rose-Hulman Institute of Technology for providing rounds of feedback on my paper and presentation and for answering my many questions. Thank you as well to Dr. David Jerison and Dr. Ankur Moitra from MIT for arranging and supervising this mentorship. I also thank Yunseo Choi for her help on redrafting my final paper, as well as past RSI students Peter Gaydarov, Kevin Cong, Victor Kolev, Miles Dillon Edwards, and Donald Liveoak for their feedback on my paper. In addition, I thank the Massachusetts Institute of Technology and the Center for Excellence in Education for hosting the Research Science Institute and arranging such an amazing summer mentorship experience for me. Finally, thank you to my sponsors for giving me the opportunity to attend this program; your support and generosity have given me an exceptional learning experience: Mr. George Thagard, Director of the Thagard Foundation; Ms. Robin Wright, Trustee of the Arnold and Kay Clejan Charitable Foundation; Mr. Victor Liu, President of the Building Futures Foundation; Dr. Jin Zhang and Dr. Eileen H. You, Mr. Joseph Lui and Ms. Madelina Ma, Mr. Sam Sivakumar and Ms. Rajshree Sankaran, Mr. and Mrs. Shane R. Albright, and Mr. Arvind Parthasarathi. 

\bibliographystyle{style}

\bibliography{biblio}

\begin{thebibliography}{1}

\bibitem{DWZ}
S.~{Dyatlov}, J.~{Wang}, and M.~{Zworski}.
\newblock {Mathematics of internal waves in a 2D aquarium}.
\newblock {\em arXiv e-prints}, page arXiv:2112.10191, Dec. 2021.

\bibitem{Hazewinkle}
J.~{Hazewinkle}, C.~{Tsimitri}, L.~M.~R. {Mass}, and S.~B. {Dalziel}.
\newblock {Observations on the robustness of internal wave attractors to
  perturbations}.
\newblock {\em Physics of Fluids}, 22(10), Oct. 2010.

\bibitem{Lenci}
M.~{Lenci}, C.~{Bonanno}, and G.~{Cristadoro}.
\newblock {Internal-wave billiards in trapezoids and similar tables}.
\newblock {\em arXiv e-prints}, page arXiv:2102.01654, Feb. 2021.

\bibitem{dynamical}
M.~Brin and G.~Stuck.
\newblock {\em Introduction to Dynamical Systems}.
\newblock Cambridge University Press, 2002.

\bibitem{hubbard}
J.~{Hubbard} and Y.~{Ilyashenko}.
\newblock {A Proof of Kolmogorov's Theorem}.
\newblock {\em Discrete and Continuous Dynamical Systems}, 10(1):367--385, Jan.
  2004.

\bibitem{maasetal}
L.~R.~M. {Maas}, D.~{Benielli}, J.~{Sommeria}, and F.-P.~A. {Lam}.
\newblock {Observation of an internal wave attractor in a confined, stably
  stratified fluid}.
\newblock {\em Nature}, 388:557–561, Aug. 1997.

\bibitem{friedlander}
F.~Friedlander, M.~Joshi, M.~Joshi, and M.~Joshi.
\newblock {\em Introduction to the Theory of Distributions}.
\newblock Cambridge University Press, 1998.

\bibitem{analysistxtbk}
R.~Beals.
\newblock {\em Analysis: An Introduction}.
\newblock Cambridge University Press, Cambridge, 2004.

\bibitem{jensen}
M.~H. {Jensen}, P.~{Bak}, and T.~{Bohr}.
\newblock {Complete Devil's Staircase, Fractal Dimension, and Universality of
  Mode-Locking Structure in the Circle Map}.
\newblock {\em Physical Review Letters}, 50(21):1637--1639, May 1983.

\end{thebibliography}

\end{document}